\theoremstyle{definition}
\newtheorem{theorem}{Theorem}[section]
\newtheorem{lemma}[theorem]{Lemma}
\newtheorem{remark}[theorem]{Remark}
\newtheorem{corollary}[theorem]{Corollary}
\newtheorem{example}[theorem]{Example}
\newtheorem{proposition}[theorem]{Proposition}
\def\id{{\rm id}}
\def\xto{\xrightarrow}
\def\to{\rightarrow}
\def\toto{\rightrightarrows}
\def\xfrom{\xleftarrow}
\def\action{\curvearrowright}
\def\xto{\xrightarrow}
\def\to{\rightarrow}
\def\toto{\rightrightarrows}
\def\xfrom{\xleftarrow}
\def\action{\curvearrowright}
\def\dashto{\dashrightarrow}
\def\then{\Rightarrow}
\def\im{{\rm im}}
\def\R{{\mathbb R}}
\begin{document}

\title{\bf The general linear 2-groupoid}


\author{Matias del Hoyo \and Davide Stefani}

\maketitle

\begin{abstract}
We deal with the symmetries of a (2-term) graded vector space or bundle. Our first theorem shows that they define a (strict) Lie 2-groupoid in a natural way. Our second theorem explores the construction of nerves for Lie 2-categories, showing that it yields simplicial manifolds if the 2-cells are invertible. Finally, our third and main theorem shows that smooth pseudo-functors into our general linear 2-groupoid classify 2-term representations up to homotopy of Lie groupoids. 
\end{abstract}

\tableofcontents


\section{Introduction}


A Lie group $G$ can be thought of as a smooth collection of symmetries of an abstract object. A linear representation $G\action V$ is therefore a way to realize these symmetries on a concrete vector space $V$, which we will assume to be finite dimensional and real. Such a representation can be defined either as a smooth map $\rho: G\times V\to V$ satisfying $\rho^h\rho^g=\rho^{hg}$ and $\rho^1=\id$, or as a Lie group morphism $G\to GL(V)$ into the general linear group. We can then study the group $G$ by looking at its representations $G\action V$, and this approach turns out to be very profitable. 


Following the previous philosophy, a Lie groupoid $G\toto M$ should be thought of as a smooth collection of symmetries of an abstract family parametrized by $M$. Lie groupoids have received much attention lately, as they provide a unifying framework for classic geometries, and also serve as models for spaces with singularities such as orbifolds and, more generally, differentiable stacks. The infinitesimal versions of Lie groupoids are Lie algebroids, geometric objects intertwining Lie algebra bundles and (singular) foliations. Differentiation and integration set up a fruitful interaction between the two theories.


A linear representation $(G\toto M)\action(V\to M)$ of a Lie groupoid over a vector bundle associates to each arrow \smash{$y\xfrom g x$} a linear isomorphism $\rho^g:V^x\to V^y$ between the corresponding fibers, in a way compatible with identities and compositions. It can be presented either as a partially defined map $G\times V\to V$ or as a Lie groupoid map $G\to GL(V)$ into the general linear groupoid \cite{dh}. 
The problem with Lie groupoid representations is that they are rather scarce, they impose strong conditions on $V$, and they do not provide us with enough information on $G\toto M$. This reflects in the lack of an adjoint representation, or in the limitations when establishing a Tannaka duality result for Lie groupoids (cf. \cite{t}).

\medskip


A solution for these problems was proposed by C. Arias Abad and M. Crainic, by introducing representations up to homotopy $G\action V$ of a Lie groupoid over a graded vector bundle \cite{ac}. They can be easily defined as differentials on certain bigraded algebras of sections, or alternatively, they can be regarded as a sequence of tensors: the first one is a differential $\partial$ on $V$, the second one consists of chain maps $\rho^g:V^x\to V^y$ between the fibers, the third one $\gamma^{h,g}$ provide chain homotopies relating $\rho^{hg}$ and $\rho^h\rho^g$, etc. Representation up to homotopy has proven to be a useful concept, for instance, when dealing with cohomology theory \cite{ac}, deformations \cite{cms} and Morita equivalences \cite{dho}.



When $V=V_1\oplus V_0$ is a 2-term graded vector bundle, a representation up to homotopy $G\action V$ leads to a VB-groupoid, a double structure mixing Lie groupoids and vector bundles, via a semi-direct product construction $G\ltimes V\to G$. It turns out that any VB-groupoid can be split as a semi-direct product, by choosing a horizontal lift of arrows, as proven first in \cite{gm}. This yields a 1-1 correspondence between VB-groupoids and 2-term representations up to homotopy, that can be extended to maps, and respect equivalence classes \cite{dho}. 
Prominent examples of VB-groupoids are the tangent and cotangent constructions. They encode the adjoint and coadjoint representations, respectively. 



A VB-groupoid is an instance of a fibration of groupoids, and according to classic Grothendieck correspondence, after choosing a horizontal lift of arrows, a groupoid fibration $E\to G$ is the same as a pseudo-functor $G\dashto\{\text{Groupoids}\}$ (cf. \cite{g}). It follows that 2-term representations up to homotopy should, in some sense, be the same as pseudo-functors. The main purpose of the present paper is to shed light on this. To take care of the smooth and the linear structure, we are led to fix a 2-term graded vector bundle $V$ and restrict our attention to pseudo-functors involving the several fibers of $V$. The resulting $G\dashto GL(V)$ is a suitable generalization of the classification map $G\to GL(V)$ for actual representations.


\medskip


Given $V=V_1\oplus V_0\to M$ a graded vector bundle, we construct a general linear 2-groupoid $GL(V)$, consisting of differentials on the fibers, quasi-isomorphisms between them, and chain homotopies.
There are several non-equivalent notions of Lie 2-groupoids in the literature, some of them too strict and some others too lax for our purposes. After discussing some variants, we introduce a notion of Lie 2-groupoid, and prove our Theorem \ref{thm:GL(V)}, asserting that $GL(V)$ is indeed a Lie 2-groupoid. It is remarkable that even for a 2-term graded vector space $V$ its general linear 2-groupoid $GL(V)$ is not a 2-group, it has more than one object, so groupoids arise naturally. 

In the set-theoretic context there is a nerve for 2-categories that relates lax functors with simplicial maps \cite{bfb,l}. We develop the smooth version of it, and our Theorem \ref{thm:NG} shows that, even though $NC$ is not always a simplicial manifold, it is so when the Lie 2-category $C$ has invertible 2-arrows, in particular for a Lie 2-groupoid.
This nerve construction relates our notion of Lie 2-groupoids with the simplicial approach to Lie 2-groupoids, based on the horn-filling condition, which has received much attention lately. This can be seen as a piece of evidence supporting our definitions for Lie 2-groupoids and smooth pseudo-functors. We also compare  our construction with that of \cite{mt}.


Building on the previous results, that we believe are of interest in their own, we finally establish our Theorem \ref{thm:main}, setting an equivalence of categories between 2-term representations up to homotopy $G\action V$ and pseudo-functors $G\dashto GL(V)$ commuting with basic projections. 
Combining this with the main theorem of \cite{gm}, and its extension in \cite{dho}, we get what we might call a smooth linear variant of Grothendieck correspondence (cf. \ref{rmk:grothendieck}):
$$\left\{\txt{VB-groupoids \\$\Gamma\to G$}\right\} 
\quad \rightleftharpoons \quad
 \left\{\txt{2-term RUTH \\ $G\action V_1\oplus V_0$}\right\}  \quad \rightleftharpoons \quad
 \left\{\txt{pseudo-functors \\ $G\dashto GL(V)$}\right\}$$

It seems natural to extend this result for higher degrees, relating positively graded representations up to homotopy and maps into a general linear $\infty$-groupoid.
Also, as potential applications of our theorem, we believe it is possible to relate our correspondence with the infinitesimal version announced in \cite{me}, and to frame the main theorem from \cite{dho} as a result about maps between differentiable 2-stacks. These problems will be explored elsewhere. 

\medskip


{\bf Organization. }
In sections 2 and 3 we quickly review 2-categories and their nerves, to fix notation and provide a reference for the tools needed later. Section 4 introduces our notion of Lie 2-groupoid and compares it with other important ones found in the literature. In section 5 we prove our first theorem, which constructs the fundamental example: the general linear 2-groupoid. Section 6 explores the combinatorics behind the nerve of 2-categories, and exploits it to establish our second theorem: the nerve of a Lie 2-category whose 2-cells are invertible is a simplicial manifold. In section 7 we prove our main theorem, realizing representations up to homotopies as maps, and we discuss further questions and applications. 

\medskip


{\bf Acknowledgements. } 
We thank H. Bursztyn, G. Ginot, A. Cabrera, E. Dubuc and M. Anel for many enriching conversations that shaped our view on the topic. 
We also thank C. Zhu for pointing out a mistake in a previous proof of Proposition \ref{prop:maximal}.
This collaboration took place during a series of visits of DS to IMPA, Rio de Janeiro, where MdH worked as a visiting professor. 
We thank IMPA for this opportunity, and Capes Cofecub and Projeto P.V.E. 88881.030367/2013-01 (CAPES/Brazil) for partial financial support.

\section{Basics on 2-categories}

\label{sec:basics}


We review here definitions and basic facts on set-theoretic 2-categories that are fundamental for the rest of the paper. We give a definition of 2-groupoid, compare it with others in the literature, and discuss the notion of lax functors. We refer to \cite{b,l,ma} for further details. The material here is preparatory, to set notation and conventions and to serve as a quick reference.

\medskip


A {\bf 2-category} $C$ is a category enriched over the category of small  categories. It has three levels of structure: objects, arrows between objects, and arrows between arrows or {\bf 2-cells}, whose collections we denote by $C_0, C_1, C_2$ respectively.
We use letters $x,y,\dots$ for objects, $f,g,\dots$ for arrows, and $\alpha,\beta,\dots$ for 2-cells.
$$\xymatrix{y \ar@{}[r]|{\Downarrow\alpha}& x \ar@/^0.7pc/[l]^{g} \ar@/_0.7pc/[l]_{f}}$$
The arrows and 2-cells between two fixed objects $x,y$ form a category $C(y,x)$, whose composition we denote by $\bullet$. For each triple $x,y,z$ there is a composition functor $C(z,y)\times C(y,x)\xto\circ C(z,x)$ and a unit $\id_x\in C(x,x)$ satisfying the axioms encoded in the following commutative diagrams:
{\footnotesize
$$\xymatrix@R=10pt@C=-30pt{
& \ar[ld]_(0.6){\circ\times\id} \ar[dr]^(0.6){\id\times\circ} C(w,z)\times C(z,y)\times C(y,x) & \\
C(w,y)\times C(y,x) \ar[dr]_\circ & & \ar[dl]^\circ  C(w,z)\times C(z,x)\\
& C(w,x) &}
 \qquad 
\xymatrix@R=10pt@C=10pt{
& \ar[rd]^{\id\times \id_x} \ar[dl]_{\id_y\times\id} \ar[dd]^\id C(y,x) & \\
C(y,y)\times C(y,x) \ar[dr]_\circ & & \ar[dl]^\circ  C(y,x)\times C(x,x)\\
& C(y,x) &} $$
}


\begin{example}
The paradigmatic example of a 2-category is that of small categories, functors and natural transformations. Another basic example is that of spaces, continuous maps and (homotopy classes of) homotopies. 
\end{example}


We are interested in 2-groupoids. For us, a {\bf 2-groupoid} $G$ is a 2-category such that (i) it is small, in the sense that $G_0$ is a set, (ii) every 2-cell is invertible, and (iii) every arrow \smash{$y\xfrom f x$} is invertible up to homotopy, namely there exist $x\xfrom g y$ and 2-cells $fg\cong\id_y$ and $gf\cong\id_x$. Some references demand the arrows be invertible on the nose. We call such 2-groupoids {\bf strict}. Let us remark that our fundamental example, that of the general linear 2-groupoid, is not strict. 

\begin{example}
A topological space $X$ yields a 2-groupoid $\pi_2(X)$ whose objects are the points of $X$, arrows are the continuous paths $I\to X$, and 2-cells are (homotopy classes of) path homotopies. Composition is given by juxtaposition, moving through each path at double speed. A non-constant path is only invertible up to homotopy, hence $\pi_2(X)$ is not strict.
\end{example}



A simple characterization of (small) 2-categories and strict 2-groupoids is by using {\em double structures}, namely diagrams of compatible structures as below, where compatible means that the horizontal structural maps are functorial with respect to the vertical structures. 
 $$\xymatrix{
G_2 \ar@<0.5ex>[r] \ar@<-0.5ex>[r] \ar@<0.5ex>[d] \ar@<-0.5ex>[d] & 
G_0 \ar@<0.5ex>[d] \ar@<-0.5ex>[d] \\
G_1 \ar@<0.5ex>[r] \ar@<-0.5ex>[r] & G_0}$$
However, our notion of 2-groupoid does not benefit much from this perspective. The following lemma, which is automatic for strict groupoids but works in general, will be useful later.

\begin{lemma}\label{lemma:factorization}
If $G$ is a 2-groupoid and $y\xfrom f x$ is an arrow in $G$, then the right multiplication functor $R_f:G(z,y)\to G(z,x)$ is an equivalence of categories for any $z$. The same holds for left multiplication.
\end{lemma}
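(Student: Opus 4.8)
The plan is to show $R_f\colon G(z,y)\to G(z,x)$ is an equivalence by exhibiting a quasi-inverse built from a weak inverse of $f$, and then checking that the two composites are naturally isomorphic to the respective identity functors. First I would use axiom (iii) of the definition of 2-groupoid: since $f$ is invertible up to homotomy there is an arrow $g\colon y\to x$ (I'll write $x\xfrom g y$) together with 2-cells $\eta\colon gf\cong\id_x$ and $\varepsilon\colon fg\cong\id_y$. Right multiplication by $g$ gives a functor $R_g\colon G(z,x)\to G(z,y)$, and I claim $R_g$ is the desired quasi-inverse of $R_f$.

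The next step is to produce the two natural isomorphisms. For an arrow $h\in G(z,y)$ we have $R_g R_f (h)=(hf)g=h(fg)$, and whiskering the 2-cell $\varepsilon\colon fg\cong\id_y$ on the left by $h$ yields a 2-cell $h\varepsilon\colon h(fg)\cong h\id_y=h$; invertibility of all 2-cells (axiom (ii)) makes this an isomorphism. Naturality in $h$ is a routine check using the interchange law between the two compositions $\bullet$ and $\circ$ — concretely, that left whiskering by a fixed arrow is a functor $G(z,y)\to G(z,y)$ and that $\varepsilon$ being a 2-cell means exactly that the family $\{h\varepsilon\}_h$ is natural. Symmetrically, $R_f R_g(h')=h'(gf)$ and whiskering $\eta\colon gf\cong\id_x$ gives a natural isomorphism $R_f R_g\cong\id_{G(z,x)}$. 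This establishes that $R_f$ is an equivalence of categories. The statement for left multiplication $L_f\colon G(y,x)\to G(z,x)$ — here one uses $L_g$ as quasi-inverse and whiskers $\eta,\varepsilon$ on the right — is proved by the identical argument, or may simply be deduced by passing to the opposite 2-category, in which left and right multiplication are interchanged and which is again a 2-groupoid.

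The only mild subtlety — and what I expect to be the main point requiring care — is keeping the bookkeeping of the two kinds of composition straight: the whiskerings $h\varepsilon$ and $h'\eta$ involve the horizontal composition $\circ$ applied to an identity 2-cell $\id_h$ and the 2-cell $\varepsilon$ (resp. $\eta$), and checking naturality amounts to unwinding the functoriality of $\circ$ with respect to the vertical composition $\bullet$, i.e. the middle-four interchange. No genuinely hard obstacle arises, since we never need the arrows to be strictly invertible; everything goes through with 2-cells in place of equalities, which is precisely why the lemma holds in our non-strict setting and not only for strict 2-groupoids.
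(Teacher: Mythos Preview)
Your proof is correct and follows essentially the same approach as the paper: pick a quasi-inverse $g$ of $f$, and use the 2-cells $\eta,\varepsilon$ witnessing $gf\cong\id_x$, $fg\cong\id_y$ to produce natural isomorphisms $R_gR_f\cong\id$ and $R_fR_g\cong\id$. The paper packages your whiskering step into the single observation that any 2-cell $\alpha\colon f\then f'$ induces a natural isomorphism $R_f\then R_{f'}$, and then notes $R_gR_f=R_{fg}\cong R_{\id_y}$; your version simply unpacks this. (One small slip: your typing of $L_f$ is off --- for $y\xfrom f x$ left multiplication should read $L_f\colon G(x,w)\to G(y,w)$ --- but the argument is unaffected.)
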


\begin{proof}
A 2-cell $\alpha:f\then g$ defines a natural isomorphism $R_f\then R_g$, for the 2-cells are invertible. Then, given an arbitrary $f$, and picking $g$ a quasi-inverse, we have $\id_{G(x,x)}=R_{\id_x}\cong R_gR_f$ and analogously $\id_{G(y,y)}=R_{\id_y}\cong R_fR_g$.
\end{proof}



A {\bf functor} $\phi:C\to D$ between 2-categories consists of functions 
$\phi_i:C_i\to D_i$  preserving all the structure {\it on the nose}. This notion is sometimes too rigid for it involves many identities between functors. A useful variant is that of a (normal) {\bf lax functor} $\phi:C\dashto D$, which consists of three maps $\phi_i:C_i\to D_i$ preserving source, target, units and the composition $\bullet$, but only preserving $\circ$ up to a given natural transformation. More precisely, also given is a map
$$\phi_{1,1}:C_1\times_{C_0}C_1\to D_2 \qquad \phi_{1,1}(g,f):\phi_1(gf)\then\phi_1(g)\circ\phi_1(f)$$
ruling the failure of associativity of $\circ$ and satisfying these coherence axioms:
\begin{itemize}
 \item[i)] $\phi_{1,1}(\id_y,f)=\id_f=\phi_{1,1}(f,\id_x)$, where \smash{$y\xfrom f x$} (normality)
 \item[ii)] $(\phi_2(\beta)\circ \phi_2(\alpha))\bullet \phi_{1,1}(g,f) = \phi_{1,1}(g',f')\bullet \phi_2(\beta\circ\alpha)$, where  
 \smash{$\xymatrix{z \ar@{}[r]|{\Downarrow\beta} & y \ar@{}[r]|{\Downarrow\alpha} \ar@/^0.5pc/[l]^{g'} \ar@/_0.5pc/[l]_{g} & x \ar@/^0.5pc/[l]^{f'} \ar@/_0.5pc/[l]_{f}}$}
 \item[iii)] $(\phi_{1,1}(h,g)\circ \phi_1(f))\bullet \phi_{1,1}(hg,f)= (\phi_1(h)\circ \phi_{1,1}(g,f))\bullet \phi_{1,1}(h,gf)$, where 
 \smash{$w \xfrom{h} z \xfrom{g} y \xfrom{f} x$}
\end{itemize}
When the structure 2-cells $\phi_{1,1}(g,f)$ are invertibles the lax functor is called a {\bf pseudo-functor}. These notions are very interesting even when $C$ is a usual category, viewed as a 2-category with only identity 2-cells.
To ease the notation we will often write $\phi$ instead of $\phi_i$, etc.

\begin{example}\label{ex:grothendieck}
Given $\pi:G\to H$ an epimorphism of groups, a set-theoretic section $\sigma:H\to G$, $\sigma(1_G)=1_H$, leads to a pseudo-functor $\phi:H\dashto\{\rm Groups\}$, where $G$ is viewed as a 2-groupoid with one object and only identity 2-cells, and $\{\rm Groups\}$ is the 2-category of groups, morphisms, and inner automorphisms as 2-cells. Here $\phi(\ast)=K$ is the kernel of $\pi$, $\phi(h)$ is given by conjugation by $\sigma(h)$, and $\phi(h',h)$ is the conjugation by $\sigma(h')\sigma(h)\sigma(h'h)^{-1}$. The lax functor is an actual functor if and only if $\sigma$ is a morphism.
\end{example}

We also need to deal with morphisms between lax functors (cf. \cite{b}). Given $\phi,\psi:C\dashto D$ lax functors between 2-categories, a {\bf lax transformation}  $H:\phi\then\psi$ associates to each $x\in C_0$ an arrow $H_x:\phi(x)\to\psi(x)$ and to each arrow $f:x\to y$ a 2-cell $H_f:H_y\phi(f)\then \psi(f)H_x$ satisfying
\begin{itemize}
 \item[i)] $H_{\id_x}=\id_{H_x}$ (normality)
 \item[ii)] $(\psi(\alpha)\circ \id_{H_x})\bullet H_f=H_g\bullet(\id_{H_y}\circ\phi(\alpha))$, 
 where \smash{$\xymatrix{y \ar@{}[r]|{\Downarrow\alpha}& x \ar@/^0.5pc/[l]^{g} \ar@/_0.5pc/[l]_{f}}$}, and
 \item[iii)] for each pair of composable arrows \smash{$z\xfrom g y\xfrom f x$} there is a commutative prism with vertical faces $H_g,H_f,H_{gf}$ and horizontal faces given by the structural 2-cells of $\phi,\psi$.
 $$\xymatrix@R=10pt{
\phi(z) \ar[dd]& & \phi(x)  \ar[ll] \ar[dd] \ar[ld]\\
 & \phi(y) \ar[dd] \ar[lu]& \\
\psi(z) & & \psi(x) \ar[ll] \ar[ld]\\
 & \psi(y) \ar[lu]&  
}$$
\end{itemize}
Such an $H$ is a {\bf lax equivalence} if the $H_x$ are invertible up to a 2-cell and the $H_f$ are invertible.

\begin{remark}\label{rmk:grothendieck}
Example \ref{ex:grothendieck} can be easily extended to suitable epimorphisms between categories, known as {\em fibred categories} 
(cf. \cite{b,g}). The outcome is {\em Grothendieck correspondence} between equivalence classes of fibred categories $E\to C$ and pseudo-functors $C\dashto\{\rm Categories\}$. This is the first and most important example of lax functors. The main goal of the present paper can be considered to be a smooth linear variant of this correspondence.
\end{remark}


\section{The nerve of a 2-category}


After reviewing the classic nerve construction, we discuss here the nerve for 2-categories and 2-groupoids. We explain its behavior with respect to lax functors, and  we use it to relate 2-groupoids with the weak approach to higher categories based on the horn filling condition. Some references for this are \cite{bc,bfb,h,l}.

\medskip


As usual, let $[n]=\{n,n-1,\cdots,1,0\}$ denote the ordinal of $n+1$ element, and let $\Delta$ be the category of finite ordinals and order preserving maps, spanned by the elementary maps
$$d^i:[n-1]\to [n] \quad d^i(k)=\begin{cases}
                           k & k<i\\ k+1 & k\geq i
                          \end{cases}\qquad
s^j:[n+1]\to [n] \quad s^j(k)=\begin{cases}
                           k & k\leq j\\ k-1 & k> j
                          \end{cases}$$
which satisfy the so-called simplicial identities. Then a {\bf simplicial set} is a contravariant functor $X:\Delta^\circ\to\{\rm Sets\}$. It can be described as a sequence of sets $X_n=X([n])$ and a collection of {\it face} $d_i=X(d^i)$ and {\em degeneracy} $s_j=X(s^j)$ operators satisfying the (dual) simplicial identities. Maps of simplicial sets are natural transformations, or equivalently, sequences of maps compatible with the faces and degeneracies. Simplicial objects on a category $\mathcal C$ are defined analogously.


\begin{example}
A simple but fundamental example is the {\bf $n$-simplex} $\Delta^n$. From the functorial viewpoint, it is the one represented by the ordinal $[n]$. Thinking of $\Delta^n$ as a graded set with further structure, it is freely generated by an element of type $[n]$, namely $\id_{[n]}$. By Yoneda's Lemma, a map $\Delta^n\to X$ is the same as an element in $X_n$. The {\bf border} $\partial\Delta^n\subset\Delta^n$ is spanned by all the faces of the generator, and the {\bf horn} $\Lambda^n_k\subset\Delta^n$ by all the faces but the $k$-th. 
\end{example}

Given $\mathcal C$ a category, and given $\phi:\Delta\to\mathcal C$ a covariant functor, which should be thought of as a model for simplices in $\mathcal C$, we can define a {\bf singular functor} $\phi^*:\mathcal C\to\{\text{Simplicial sets}\}$
that associates to each object $X\in\mathcal C$ a simplicial set by the formula $(\phi^*X)_n=\hom_{\mathcal C}(\phi([n]),X)$. 
In other words, $\phi^*X$ is the restriction of the contravariant functor represented by $X$ to $\Delta$ via $\phi$. 

\begin{example}
When $\mathcal C$ is the category of topological spaces and $\phi([n])$ is the topological $n$-simplex, then $\phi^*X=SX$ is the {\bf singular simplicial set} associated to $X$, used to define its homology. When $\mathcal C$ is the category of (small) categories and $\phi([n])=[n]$, where we see an ordinal as a category by setting an arrow $i\to j$ if $i\leq j$, then $\phi^*C=NC$ is the {\bf nerve} of the category,
whose $n$-simplices are chains of $n$ composable arrows, and whose faces and degeneracies are given by droping an extremal arrow, composing two consecutive ones, or inserting an identity. 
\end{example}


We are concerned with the nerve construction for 2-categories, namely the singular functor defined when $\mathcal C$ is the category of 2-categories and lax functors, and $\phi([n])=[n]$ is viewed as a 2-category with only identity 2-cells. Thus, if $C$ is a 2-category, then its {\bf nerve} $NC$ has as $n$-simplices the lax functors $u:[n]\dashto C$, and its simplicial operators are given by pre-composition. Note that $NC_0=C_0$ and $NC_1=C_1$ consist of the objects and arrows of $C$, respectively, and $NC_2$ consists of triangles that are commutative up to a given 2-cell:
$$ \xymatrix{ 
& y \ar[dl]_{g} & \\
z   & \ar@{}[u]|(0.4){\ \ \Uparrow \alpha} & x \ar[ul]_{f} 
\ar[ll]^{h}}$$


To describe the higher simplices, note that a lax functor $u:[n]\dashto C$ can be thought of as a labelling in an abstract $n$-simplex, where $u_i$ are objects at the vertices, $u_{j,i}$ are arrows at its edges, and $u_{k,j,i}$ are 2-cells corresponding to each triangle. For each tetrahedron on the simplex the following equation among 2-cells must hold:
$$\xymatrix@R=15pt@C=0pt{
& u_{l,i} \ar@{=>}[rd]^{u_{l,j,i}} \ar@{=>}[dl]_{u_{l,k,i}} & \\ 
u_{l,k}u_{k,i} \ar@{=>}[rd]_{u_{l,k} u_{k,j,i} \ \ \ \ } & & u_{l,j}u_{j,i} \ar@{=>}[dl]^{\ \ \ \ u_{l,k,j}{u_{j,i}}} \\
 & u_{l,k}u_{k,j}u_{j,i} &
}$$
The above data completely determines the nerve $NC$ in the sense that it is  {\em 3-coskeletal}, namely $NC_k=\{\partial\Delta^k\to NC\}$ for $k>3$. 
%


A fundamental feature of the classic nerve for 1-categories is that it defines a fully faithful functor, it embeds the category of (small) categories into that of simplicial sets. Extending this, there is the following proposition for the nerve of 2-categories, which also provides information about the 2-cells. Here, by a {\bf simplicial homotopy} we mean a simplicial map $X\times\Delta^1\to Y$.

\begin{proposition}[\cite{bfb}]\label{prop:lax=simplicial}
The nerve $C\mapsto NC$ defines a fully faithful functor from the category of (small) 2-categories and (normal) lax functors to the category of simplicial sets. Moreover, if $\phi,\psi:C\dashto D$ are lax functors and every 2-cell in $D$ is invertible, then there is a lax transformation $H:\phi\then\psi$ if and only if there is a simplicial homotopy $\tilde H:N\phi\cong N\psi$.
\end{proposition}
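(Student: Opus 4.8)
The plan is to prove the two assertions separately, and to treat the ``moreover'' part as the heart of the matter; the fully faithfulness of $N$ is the easier half and largely a bookkeeping exercise.

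\textbf{Step 1: Fully faithfulness.}
First I would observe that a simplicial map $F\colon NC\to ND$ is determined by its effect on low-dimensional simplices, since $NC$ is $3$-coskeletal. Concretely, $F$ in degrees $0,1$ gives maps $C_0\to D_0$ and $C_1\to D_1$; compatibility with faces and degeneracies forces these to preserve source, target and units. In degree $2$, an element of $NC_2$ is a triangle commuting up to a $2$-cell; by fixing the two outer edges to be a composable pair $(g,f)$ with $h=gf$ chosen as the composite and the filling $2$-cell to be the identity, the image under $F$ of this ``trivial'' triangle produces precisely a $2$-cell $\phi_1(gf)\Then\phi_1(g)\circ\phi_1(f)$, which I would take as the candidate structure map $\phi_{1,1}(g,f)$. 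The simplicial identities relating the $2$- and $3$-simplices then translate, one tetrahedron equation at a time, into exactly the coherence axioms (i)--(iii) for a lax functor; and conversely, a lax functor $\phi\colon C\dashto D$ assembles into a simplicial map by sending $u\colon[n]\dashto C$ to $\phi\circ u$. These two passages are mutually inverse essentially by construction, giving a bijection $\mathrm{Lax}(C,D)\cong\hom_{s\mathrm{Set}}(NC,ND)$, natural in both variables; this is the content of \cite{bfb} and I would cite it, filling in only the dictionary above. The injectivity half (faithfulness) is immediate since $\phi_i$ is recovered from $N\phi$ in degrees $\le 2$.

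\textbf{Step 2: From lax transformations to simplicial homotopies.}
Given $H\colon\phi\Then\psi$, I must build $\tilde H\colon NC\times\Delta^1\to ND$. I would use the standard shuffle decomposition of $\Delta^n\times\Delta^1$ into $(n+1)$ nondegenerate $(n+1)$-simplices, indexed by the ``position'' $0\le k\le n$ at which the path in $\Delta^1$ jumps from $0$ to $1$. On an $n$-simplex $u\colon[n]\dashto C$, the $k$-th shuffle simplex should be sent to the lax functor $[n+1]\dashto D$ whose vertices are $\phi(u_0),\dots,\phi(u_k),\psi(u_k),\dots,\psi(u_n)$, whose edges on the $\phi$-side come from $\phi$, on the $\psi$-side from $\psi$, and whose one ``crossing'' edge is $H_{u_k}$; the triangle $2$-cells are supplied by $\phi$, by $\psi$, and by the transformation $2$-cells $H_f$ for the mixed triangles. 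That this is a well-defined simplicial map amounts to checking the face identifications between adjacent shuffles and the behaviour at the two ends $\Delta^n\times\{0\}$, $\Delta^n\times\{1\}$ (which must recover $N\phi$ and $N\psi$): the end conditions are immediate, the internal gluing identities reduce precisely to axioms (i) and (ii) for $H$, and the tetrahedron equations in $ND$ for the resulting $(n+1)$-simplices reduce to axiom (iii) (the commuting prism) together with the coherence of $\phi$ and $\psi$. Since $ND$ is $3$-coskeletal one only needs to verify everything through dimension $3$, which keeps the combinatorics finite.

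\textbf{Step 3: From simplicial homotopies back to lax transformations, and the role of invertibility.}
Conversely, from $\tilde H\colon N\phi\cong N\psi$ I read off $H_x$ as the image of the $1$-simplex $(\id_{[0]},\,0\to 1)$ over $x\in C_0$, i.e.\ the crossing edge of the $0$-dimensional cylinder; and $H_f$, for $f\colon x\to y$, as the $2$-cell obtained from one of the two nondegenerate $2$-simplices of the square $\Delta^1\times\Delta^1$ lying over $f$. This $2$-cell naturally has the form of a triangle $H_y\phi(f)\Then\psi(f)H_x$ \emph{up to} the structural filling; extracting from it a genuine $2$-cell $H_y\phi(f)\Then\psi(f)H_x$ requires inverting the structure $2$-cell of the relevant triangle in $ND$. \textbf{This is the one place where the hypothesis that every $2$-cell in $D$ be invertible is used}, and it is the main obstacle: without it, a simplicial homotopy gives only an ``oplax'' or one-sided transformation, not the two-sided $H_f$ demanded by our definition. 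Once $H_x,H_f$ are in hand, axioms (i)--(iii) for $H$ follow by rereading the simplicial identities of $\tilde H$ in dimensions $\le 3$, exactly reversing Step 2. Finally I would note that these two constructions are inverse to each other (again a dimension-$\le 3$ check), completing the equivalence; I would remark in passing that $H$ is a lax equivalence precisely when $\tilde H$ is a simplicial homotopy equivalence, though this is not needed for the statement.
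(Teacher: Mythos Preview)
Your approach is correct and matches the paper's sketch closely: extract $\phi_0,\phi_1,\phi_{1,1}$ from degrees $\le 2$, and for the homotopy part pass between the square 2-cell $H_f$ and the two triangle 2-cells $\tilde H_{f,0},\tilde H_{f,1}$ of the triangulated square, using invertibility of 2-cells in $D$ to compose $\tilde H_{f,1}\bullet(\tilde H_{f,0})^{-1}$ in the backward direction. Two small corrections are worth making. First, in Step~1 you recover $\phi_0,\phi_1,\phi_{1,1}$ but never say how to obtain $\phi_2\colon C_2\to D_2$; the paper handles this by evaluating $\tilde\phi_2$ on the \emph{other} special triangles, those with one edge an identity, since a general 2-cell $\alpha\colon f\Rightarrow g$ in $C$ is encoded as the triangle $(\id_y,f,g;\alpha)$. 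Second, your final remark that the two constructions in Steps~2--3 are mutually inverse is not quite right: going $\tilde H\mapsto H\mapsto \tilde H'$ forces $\tilde H'_{f,0}=\id$, which need not recover the original $\tilde H_{f,0}$. This does no harm, since the statement only claims existence of $H$ iff existence of $\tilde H$, not a bijection.
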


\begin{proof}[Sketch of proof]
Given a simplicial map $\tilde\phi:NC\to ND$, we can define a lax functor $\phi:C\dashto D$ such that $N\phi=\tilde\phi$ by setting $\phi_0=\tilde\phi_0$, $\phi_1=\tilde\phi_1$, and defining $\phi_2$ and $\phi_{1,1}$ as the restrictions of $\tilde\phi_2$ to the following types of triangles. The simplicial identities on $\tilde\phi$ imply the axioms of a lax functor on $\phi$, and that $N\phi=\tilde\phi$, proving the first assertion.
$$ \xymatrix{ 
& y \ar[dl]_{\id_y} & \\
y   & \ar@{}[u]|(0.4){\ \ \Uparrow \alpha} & x \ar[ul]_{f} 
\ar[ll]^{f}}
\qquad\qquad
\xymatrix{ 
& y \ar[dl]_{g} & \\
z   & \ar@{}[u]|(0.4){\ \ \Uparrow \id_{gf}} & x \ar[ul]_{f} 
\ar[ll]^{gf}}
$$ 
Regarding the second triangle, given $\phi,\psi:C\dashto D$ lax functors, while a lax transformation $H:\phi\cong\psi$ associates to an arrow \smash{$y\xfrom f x$} a 2-cell filling a commutative square, a simplicial homotopy $\tilde H:N\phi\cong N\psi$ should provide a triangulation of that square:
$$\xymatrix{
\phi(y) \ar[d]_{H_y} \ar@{}[drr]|{\ \ \Downarrow H_f} & & \phi(x) \ar[d]^{H_x} \ar[ll]_{\phi(f)} \\
\psi(y) & & \psi(x) \ar[ll]^{\psi(f)}}
\qquad \qquad
\xymatrix{
\phi(y) \ar[d]_{\tilde H_y} & \ar@{}[dl]|(0.3){\Uparrow \tilde H_{f,0} \ \ } & \phi(x) \ar[d]^{\tilde H_x} \ar[ll]_{\phi(f)} \ar[lld] 
\ar@{}[dl]|(0.7){\ \ \ \  \Downarrow \tilde H_{f,1}}\\
\psi(y) & & \psi(x) \ar[ll]^{\psi(f)}}$$
where $\tilde H_{f,0}$ and $\tilde H_{f,1}$ are short for $\tilde H(s_1(f),s_0(\id_{[1]}))$ and $\tilde H(s_0(f),s_1(\id_{[1]}))$.
The lax transformation $H$ induces a simplicial homotopy $\tilde H$ by setting $\tilde H_{f,0}=\id$ and $\tilde H_{f,1}=H_f$. Conversely, if every 2-cell on $D$ is invertible, we can define an $H$ out of $\tilde H$ by setting $H_f=\tilde H_{f,1}\bullet (\tilde H_{f,0})^{-1}$.
\end{proof}


Another fundamental feature of the classic nerve is the following characterization of its image: a simplicial set is the nerve of a category if and only if every inner horn ($0<k<n$) admits a filling, and this filling is unique for $n>1$. Similarly, it is the nerve of a groupoid if and only if the same holds for every horn, inner or not.
$$\xymatrix{\Lambda^n_k \ar[r]^\forall \ar[d] & X  \\ \Delta^n \ar@{-->}[ur]_{\exists(!)} &}$$
This motivates an approach to higher category theory that has received much attention lately. A simplicial set $X$ is then a {\bf weak $m$-category} if every inner horn in $X$ admits a filling, and the filling is unique for $n>m$, and $X$ is a {\bf weak $m$-groupoid} if the same holds for every horn, inner or not.
The missing face of the horn, provided by the filling, should be thought of as a {\em composition}, defined up to homotopy, of the remaining faces. The next proposition relates 2-groupoids with weak 2-groupoids via the nerve functor. Similar results are discussed in \cite{d}. 

\begin{proposition}\label{prop:bridge}
Given $C$ a 2-category, $NC$ is a weak 2-category if and only if every 2-cell of $C$ is invertible, and $NC$ is a weak 2-groupoid if and only if $C$ is a 2-groupoid.
\end{proposition}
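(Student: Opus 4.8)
The plan is to prove both biconditionals by carefully analyzing horn fillings in low dimensions and then invoking 3-coskeletality to handle all higher dimensions uniformly. I will treat the "weak 2-category" statement first and then bootstrap the "weak 2-groupoid" statement from it.

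First I would unwind what a filling of each horn means in terms of the 2-categorical data, using the description of $NC$ from the previous section: $NC_0=C_0$, $NC_1=C_1$, a $2$-simplex is a triangle $h\Rightarrow gf$ commuting up to a $2$-cell $\alpha$, and $NC$ is $3$-coskeletal. For $n=2$: an inner horn $\Lambda^2_1$ consists of a composable pair $y\xfrom g x$, $z\xfrom{} y$ wait — more precisely two arrows $f\colon x\to y$ and $g\colon y\to z$; a filling is a triangle with third edge $h$ and a $2$-cell $gf\Rightarrow h$ (equivalently $h\Rightarrow gf$ depending on orientation convention), and the canonical choice $h=gf$, $\alpha=\id$ always works, so inner $2$-horns always fill — regardless of invertibility. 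The outer horns $\Lambda^2_0$ and $\Lambda^2_2$ are where invertibility of arrows, hence of $C$ being a $2$-groupoid, enters; a filling of $\Lambda^2_0$ given $f\colon x\to y$ and $h\colon x\to z$ requires $g\colon y\to z$ with a $2$-cell relating $h$ and $gf$, and by Lemma \ref{lemma:factorization} this exists exactly when left multiplication by $f$ is an equivalence, i.e. when $f$ is invertible up to homotopy. This is the place where I expect to lean hardest on Lemma \ref{lemma:factorization}: it gives surjectivity-up-to-$2$-cell of $R_f$ and $L_f$ on objects of the hom-categories, which is precisely solvability of the outer $2$-horns.

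Next, for $n=3$, I would analyze the uniqueness of inner-horn fillings, which is the crux of the first biconditional. A $3$-simplex is a tetrahedron: four triangles glued along their edges satisfying the tetrahedron identity displayed in the excerpt. Given an inner horn $\Lambda^3_1$ or $\Lambda^3_2$ — three of the four triangular faces — filling it amounts to producing the fourth triangle and checking the cocycle identity; existence is automatic by $3$-coskeletality combined with the fact that the missing face's edge and $2$-cell are forced, but uniqueness of the filling for $n=3$ is equivalent to: whenever two $2$-cells $\alpha,\beta\colon u\Rightarrow v$ become equal after whiskering by appropriate invertible-up-to-homotopy data, they were already equal. Tracing through, the freedom in a filling is exactly the choice of the $2$-cell on the missing face, and the three given faces pin this $2$-cell down up to composition with a $2$-cell that must be invertible; so uniqueness for all inner $3$-horns holds iff every $2$-cell of $C$ is invertible. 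For $n>3$ coskeletality makes both existence and uniqueness of all (inner) horn fillings automatic once dimensions $\leq 3$ are settled, since a horn $\Lambda^n_k$ with $n>3$ already determines a full map $\partial\Delta^n\to NC$ by its $3$-skeleton. This reduces everything to the finite check in dimensions $2$ and $3$.

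Finally, for the second biconditional I would observe that $NC$ being a weak $2$-groupoid means additionally that the outer horns $\Lambda^n_0$ and $\Lambda^n_n$ fill (with uniqueness for $n>2$). By the $n=2$ analysis above, fillability of outer $2$-horns is equivalent to every arrow of $C$ being invertible up to homotopy, which together with the already-established condition (every $2$-cell invertible) is exactly the definition of a $2$-groupoid; and the uniqueness in dimensions $>2$ again comes for free from $3$-coskeletality plus the $n=3$ argument, which does not distinguish inner from outer horns. Conversely if $C$ is a $2$-groupoid then both conditions hold, so $NC$ is a weak $2$-groupoid. The main obstacle I anticipate is bookkeeping the orientation conventions and making the "the missing $2$-cell is determined up to an invertible $2$-cell" claim precise in the $n=3$ case without drowning in whiskering diagrams; I would handle this by isolating the single algebraic identity that the fourth face must satisfy and solving for its $2$-cell explicitly, showing the solution set is either empty, a singleton, or a torsor under a group of $2$-cells that is trivial precisely when all $2$-cells are invertible.
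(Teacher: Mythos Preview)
Your overall architecture matches the paper's, but there are two genuine gaps and one misplaced emphasis.

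\textbf{The case $n=4$ is not handled by 3-coskeletality.} Coskeletality says that a $4$-simplex is determined by its boundary, i.e.\ by five compatible $3$-simplices. But a $\Lambda^4_k$-horn gives only four of them; the boundary $\partial\Delta^3$ of the missing $3$-face is indeed forced by the horn, yet a $3$-simplex in $NC$ is \emph{not} just its boundary --- it must satisfy the tetrahedron identity. So you still owe an argument that this identity holds for the missing face. The paper does this with a cube: the $2$-skeleton of $\Delta^4$ yields a commutative cube whose six faces are the five tetrahedra of $\partial\Delta^4$ together with the interchange law for $\circ$ and $\bullet$; since all $2$-cells are invertible, five commuting faces force the sixth. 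Only for $n\geq 5$ does coskeletality make horn-filling genuinely automatic.

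\textbf{The implication ``inner $3$-horns fill $\Rightarrow$ every $2$-cell is invertible'' uses existence, not uniqueness.} Your torsor sketch frames this as a uniqueness statement, but the clean argument (and the paper's) goes the other way: given $\alpha:f\Rightarrow g$, build a concrete $(3,1)$-horn whose three faces carry the $2$-cells $\id$, $\id$, $\alpha$; the $2$-cell on any filling's missing face is then forced by the square to be a one-sided inverse of $\alpha$. So it is the \emph{existence} of an inner-horn filler that manufactures the inverse.

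\textbf{Outer $3$-horns need Lemma~\ref{lemma:factorization}, not just outer $2$-horns.} In the square
\[
\xymatrix{
u_{3,0} \ar@{=>}[r] \ar@{=>}[d] & u_{3,1}u_{1,0} \ar@{=>}[d] \\
u_{3,2}u_{2,0} \ar@{=>}[r] & u_{3,2}u_{2,1}u_{1,0}
}
\]
the inner faces $d_1,d_2$ are the top and left arrows, while the outer faces $d_0,d_3$ are the right and bottom arrows, which are whiskerings $(-)\circ u_{1,0}$ and $u_{3,2}\circ(-)$. Filling an outer $3$-horn therefore requires undoing a whiskering, and that is exactly what Lemma~\ref{lemma:factorization} provides. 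Your plan invokes that lemma only at the $n=2$ level.
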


\begin{proof}
Since $NC$ is 3-coskeletal, every $(n,k)$-horn has a unique filling for $n\geq 5$. For $n=2$ the horizontal composition of arrows provides inner horn-fillings, and the fillings of the outer horns correspond to the existence of quasi-inverses. So let us study the cases $n=3,4$.

For $n=3$, given a 2-cell $\alpha:f\then g:x\to y$, we can build a $(3,1)$-horn with faces thus,
$$\xymatrix@R=5pt@C=15pt{
& & y & & \\
& \ar@{}[rd]|{\id \then}& \ar@{}[rd]|{\id \then} & & \\
&  & y \ar[uu]^\id \ar[rrd]^\id & & \\
x \ar[rrrr]_f \ar[rruuu]^g \ar[rru]^g & &  \ar@{}[u]|(.4){\alpha\Uparrow\ } & & y\ar[lluuu]_\id
}$$
and the remaining face of a filling will give a right inverse $\beta:g\then f$ to $\alpha$, showing that inner-horn filling implies that every 2-cell is invertible. Conversely, a horn gives three 2-cells, which correspond to three sides of this square:
$$\xymatrix{
u_{3,0} \ar@{=>}[r] \ar@{=>}[d] & u_{3,1}u_{1,0} \ar@{=>}[d] \\
u_{3,2}u_{2,0} \ar@{=>}[r] & u_{3,2}u_{2,1}u_{1,0}
}$$
In an inner horn, either the 2-cell on the top or in the left is missing, but since every 2-cell is invertible, we can fill the square by taking the obvious composition. In an outer horn, either the 2-cell on the bottom or on the right is missing, and assuming $C$ is a 2-groupoid, we can get the missing face by factoring the triple composition by either $u_{3,2}$ or $u_{1,0}$ as it follows from \ref{lemma:factorization}.

For $n=4$, the 2-skeleton of a 4-simplex $u$ gives the edges of a cube as below:
$$\xymatrix@R=15pt@C=5pt{
& u_{4,0} \ar@{=>}[rd] \ar@{=>}[dl] \ar@{=>}[d]& \\
u_{4,1}u_{1,0} \ar@{=>}[d] \ar@{=>}[dr] & 
u_{4,2}u_{2,0} \ar@{=>}[ld]|\hole \ar@{=>}[rd]|\hole & u_{4,3}u_{3,0} 
\ar@{=>}[d] \ar@{=>}[dl] \\
u_{4,2}u_{2,1}u_{1,0} \ar@{=>}[dr] & u_{4,3}u_{3,1}u_{1,0} \ar@{=>}[d] & 
u_{4,3}u_{3,2}u_{2,0} \ar@{=>}[dl] \\
& u_{4,3}u_{3,2}u_{2,1}u_{1,0}
}$$
Each face of the 4-simplex corresponds to the commutativity of the corresponding face of the cube. The bottom face commutes because of the compatibility between horizontal and vertical composition. Since every 2-cell is invertible, five commuting faces on the cube imply that the other is commutative as well, thus every horn admits a unique filling, concluding the proof.
\end{proof}


\begin{remark}\label{lemma:nerves}
Other ways to associate a simplicial set to a 2-category $C$ are by regarding it as a double category with a trivial side, applying twice the classic nerve, and reducing the resulting bisimplicial set by using the {\em diagonal} $d$ or the {\em total functor} $T$, also known as bar or codiagonal:
$$\text{2-categories}\xto{N^2}\text{bisimplicial sets}\overset{d,T}\toto\text{simplicial sets}$$
It is shown in \cite{bc} that $TN^2C$ and $dN^2C$ are equivalent to $NC$ from a homotopy viewpoint. We remark here that, when $C$ is a strict 2-groupoid there is actually an isomorphism $TN^2C\cong NC$, 
which is completely determined by the following formula for 2-cells. 
$$\begin{matrix}
\xymatrix@R=15pt@C=25pt{
z & \ar[l]_g y \ar@{=}[d] \ar@{}[dr]|{\ \Uparrow\alpha} &  \ar@{=}[d] x \ar[l]_h\\
& y  & x \ar[l]^f \ar@{=}[d] \\
 & & x}
  \end{matrix}
\quad\mapsto\quad
\begin{matrix}
\xymatrix{ 
& y \ar[dl]_{g} & \\
z   & \ar@{}[u]|(0.4){\ \ \Uparrow g\alpha^{-1}} & x \ar[ul]_{f} 
\ar[ll]^{gh}}
\end{matrix}$$
\end{remark}

\section{Defining Lie 2-groupoids}


We discuss here the smooth versions of 2-categories and 2-groupoids we are going to work with, provide some examples, and discuss other uses for those terms in the literature.

\medskip


A {\bf Lie 2-category} $C$ is, roughly speaking, a 2-category internal to the category of smooth manifolds. It consists of a (small) 2-category as defined before, on which (i) the sets of objects $C_0$, arrows $C_1$ and 2-cells $C_2$ are equipped with manifold structures; (ii) the source and target maps $s,t:C_i\to C_{i-1}$ of 2-cells and arrows are surjective submersions, and (iii) the units $u:C_{i-1}\to C_i$ and the multiplications $\circ$ and $\bullet$ are smooth.
Functors $\phi:C\to D$ between Lie 2-categories are easy to define, as 2-functors for which the three maps $\phi_i:C_i\to D_i$ are smooth. 


\begin{example}\label{example:deloopingR}
Let $(\R,\cdot)$ be the multiplicative monoid of real numbers, viewed as a Lie 2-category with a single object, space of arrows $\R$, and both horizontal and vertical composition equal to the multiplication. This is a Lie 2-category on which not every 2-cell is invertible.
\end{example}


Let $G$ be a Lie 2-category that, from the set-theoretic viewpoint, is also a 2-groupoid, as defined in the previous sections.
In order to define when $G$ is a Lie 2-groupoid we have to make sense of smooth inversions. For 2-cells this is clear, for there is an inversion map $i:G_2\to G_2$, and we can require it to be smooth. For arrows this is less clear, for inversion is only defined up to homotopy, there is no inversion map in general. Note that, since source and target $G_2\to G_1$ are surjective submersions, the sets of 2-horns $N_{2,i}G=\hom(\Lambda^2_i,NG)$ define manifolds:

$$N_{2,0}G=\left\{\begin{matrix}\xymatrix@R=5pt@C=5pt{
& y  & \\z   & & x \ar[ul]_{f}\ar[ll]^{h}}\end{matrix}\right\}
\qquad
N_{2,1}G=\left\{\begin{matrix}\xymatrix@R=5pt@C=5pt{ 
& y  \ar[ld]_g& \\ z   & & x \ar[ul]_{f}}\end{matrix}\right\}
\qquad
N_{2,2}G=\left\{\begin{matrix}\xymatrix@R=5pt@C=5pt{ 
& y \ar[ld]_g & \\ z   & & x  \ar[ll]^{h}}\end{matrix}\right\}
$$
We will discuss a smooth structure on the whole nerve $NG$ in the following sections. For now, we just endow $N_2G$ with a manifold structure using the fibered product
$$\xymatrix{
N_2G \ar[r] \ar[d] & N_{2,1}G \ar[d]^m \\
G_2 \ar[r]_t & G_1
}$$

We define $G$ to be a {\bf Lie 2-groupoid} if, besides being a Lie 2-category and a 2-groupoid, (i) the inversion of 2-cells $i:G_2\to G_2$ is smooth, and (ii) the following restriction maps are surjective submersions: 
$$d_{2,0}:N_2G\to N_{2,0}G \qquad d_{2,2}:N_2G\to N_{2,2}G.$$
We say that the Lie 2-groupoid is {\bf strict} if it is set-theoretic strict and the inversion of arrows $i:G_1\to G_1$ is smooth.
The smooth structure on $N_2G$ also allow us to make sense of lax functors in the smooth setting. We define a {\bf smooth lax functor} between Lie 2-categories $\phi:C\dashto D$ as a lax functor such that $\phi_0$, $\phi_1$ and the map $(\phi_2,\phi_{1,1}):N_2C\to N_2D$ are smooth.
A {\bf smooth lax transformation} $H:\phi\then\psi$ is one on which the maps $C_0\to D_1$, $C_1\to D_2$ are smooth.


\begin{example} 
Given $K$ an abelian Lie group, we can see it as the 2-cells of a Lie 2-category with one object and one arrow, and where both multiplications $\bullet$ and $\circ$ agree with that of $K$. The resulting 2-category $K\toto \ast\toto\ast$ is in fact a Lie 2-groupoid. A similar thing can be done with a bundle of abelian Lie groups $G\toto M$, such as a torus bundle. 
This {\em delooping} construction stays within the finite dimensional setting and plays a key role for instance in the theory of {\em gerbes}.
\end{example}


We would like to quickly review the {\bf Dold-Kan construction}. When $\mathcal C$ is an abelian category, eg. that of vector spaces, then a simplicial object $X:\Delta^\circ\to\mathcal C$ gives rise to a chain complex $(X'_n,\partial)$ by defining
$X'_n=\cap_{i>0} \ker(d_i:X_n\to X_{n-1})$ and $\partial=d_0$. It turns out that this construction yields an equivalence of categories between simplicial objects and positively graded chain complexes. The horn-filling condition translates into the abelian setting, in such a way that categories and groupoids both correspond to 2-term complexes, and linear natural transformations correspond to chain homotopies. 

\begin{example} 
By a {\bf linear 2-category} we mean a Lie 2-category $V$ on which the $V_i$ are (real finite dimensional) vector spaces and the structure maps are linear. They are examples of Lie 2-groupoids. Viewing them as double linear categories, and applying Dold-Kan correspondence both horizontally and vertically, we encode such a $V$ into a 3-term complex as follows: 
$$\xymatrix{
V_2' \ar[r] \ar[d] & 0 \ar[d]\\
V_1' \ar[r] & V_0
}$$
\end{example}


\begin{remark}
The term "Lie 2-groupoid" is used in the literature in senses other than the one we have introduced, which is suitable for our fundamental example. In \cite{mt} and other references, they use the word to refer to what we called strict Lie 2-groupoid, they demand the inverse of arrows to exists, our notion is more general. In \cite{z} and other references a Lie 2-groupoid is defined as a smooth version of weak 2-groupoids, they do not require the existence of a well-defined composition. We will see later that a smooth version of the nerve functor for Lie 2-categories allows us to regard our Lie 2-groupoids as examples of them.
\end{remark}


\section{The general linear 2-groupoid}


Here we show our first main theorem, asserting that the symmetries of a (2-term) graded vector space or bundle can be endowed with the structure of a Lie 2-groupoid, which we call the general linear 2-groupoid. This construction extends the general linear groupoid of a vector bundle without a grading (see eg. \cite{dh}).

\medskip


Throughout this section, let $V=V_1\oplus V_0\to M$ be a graded vector bundle over a smooth manifold. We will first describe the set-theoretic structure of its general linear 2-groupoid $GL(V)$ and then take care of the smoothness. From the set-theoretic viewpoint we have: 
\begin{itemize}
 \item[i)] An object $\partial^x\in GL(V)_0$ is a differential $\partial^x: V_1^x\to V_0^x$ on the fiber $V^x=V_0^x\oplus V_1^x$; 
 \item[ii)] An arrow $\alpha:\partial^x\to\partial^y\in GL(V)_1$ is a couple of linear maps $\alpha_1:V_1^x\to V_1^y$, $\alpha_0:V_0^x\to V_0^y$, defining a quasi-isomorphism between $V^x$ and $V^y$; 
$$\xymatrix{
V_1^x \ar[r]^{\alpha_1} \ar[d]_{\partial^x} & V_1^y \ar[d]^{\partial^y} \\
V_0^x \ar[r]_{\alpha_0} & V_0^y
}$$
 \item[iii)] A 2-cell $R:\alpha\to\alpha':\partial^x\to\partial^y$ on $GL(V)_2$ is a chain homotopy, given by a linear map $R:V_0^x\to V_1^y$ such that $R\partial^x=\alpha_1-\alpha'_1$ and $\partial^y R=\alpha_0-\alpha'_0$.
$$\xymatrix{
V_1^x \ar[r]^{\alpha_1} \ar[d]_{\partial^x} & V_1^y \ar[d]^{\partial^y} \\
V_0^x \ar[r]_{\alpha_0} \ar[ur]^R & V_0^y
}$$
\end{itemize}
The multiplication $\circ$ in $GL(V)$ is the composition of maps, and the multiplication $\bullet$ is the composition of chain homotopies, which is just the sum of the corresponding maps $R$. Every 2-cell is invertible, and every arrow is invertible up to a 2-cell. Thus we have a well-defined 2-groupoid $GL(V)$. Via Dold-Kan we can embed it into the 2-category of linear categories.

\begin{remark}
Even when $M=\ast$ our construction $GL(V)$ yields a 2-groupoid and not what one might call a 2-group, for there are many objects and not just one. Fixing an object $\partial$ on $GL(V)$, its isotropy 2-groupoid $GL(V)_\partial$ can be compared with the construction studied in \cite{sz}. 
\end{remark}


Next we show that $GL(V)$ inherits a smooth structure from certain vector bundles. To ease the notation, given $A,B\to M$ vector bundles, we write $[A,B]\to M$ for the inner-hom vector bundle.
Then:

\begin{itemize}
\item[i)] $GL(V)_0$ identifies with the total space of $[V_1,V_0]\to M$, 
\item[ii)] $GL(V)_1$ is a subspace of $E=[\pi_1^*V_1,\pi_1^*V_0]\oplus [\pi_2^*V_1,\pi_2^*V_0]\oplus
 [\pi_1^*V_1,\pi_2^*V_1]\oplus [\pi_1^*V_0,\pi_2^*V_0]$, a vector bundle over $M\times M$, where $\pi_i:M\times M\to M$ are the obvious projections, and 
 \item[iii)] $GL(V)_2$ is the set-theoretic fiber product $GL(V)_1\times_{M\times M} [\pi_1^*V_0, \pi_2^*V_1]$. 
\end{itemize}
 
The issue here is to show that $GL(V)_1\subset E$
is a submanifold. Then $GL(V)_2$ will identify with a fibered product along a submersion, in fact with a pullback vector bundle. This issue is rather subtle and will require a careful analysis. The first step in our argument is to provide a simple system of equations describing $GL(V)_1\subset E$.

\begin{lemma}\label{lemma:equations}
We can write $GL(V)_1=F\cap U_1\cap U_0$ where
\begin{align*}
F&=\{(\partial^x,\partial^y,\alpha_0, \alpha_1)\in E:\alpha_0\partial^x=\partial^y\alpha_1\}\\
U_1&=\{(\partial^x,\partial^y,\alpha_0,\alpha_1)\in E:\ker(\partial^x)\cap\ker(\alpha_1)=0\}\\
U_0&=\{(\partial^x,\partial^y,\alpha_0,\alpha_1)\in E:\im(\partial^y)+\im(\alpha_0)=V_0^y\}
\end{align*}
\end{lemma}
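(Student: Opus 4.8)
The plan is to unwind the definitions on both sides and check that a point $(\partial^x,\partial^y,\alpha_0,\alpha_1)\in E$ lies in $GL(V)_1$ exactly when it satisfies the three stated conditions. Recall that, by definition iii), a point of $E$ belongs to $GL(V)_1$ precisely when the square
$$\xymatrix{
V_1^x \ar[r]^{\alpha_1} \ar[d]_{\partial^x} & V_1^y \ar[d]^{\partial^y} \\
V_0^x \ar[r]_{\alpha_0} & V_0^y
}$$
commutes and the resulting chain map is a quasi-isomorphism between the two-term complexes $(V^x,\partial^x)$ and $(V^y,\partial^y)$. The commutativity of the square is literally the defining equation of $F$, so the content is in translating ``quasi-isomorphism'' into the two open conditions $U_1$ and $U_0$.

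First I would record that a chain map $\alpha$ between two-term complexes concentrated in degrees $0$ and $1$ is a quasi-isomorphism if and only if it induces isomorphisms on $H_0$ and $H_1$. Since the complexes have only two terms, $H_1(V^x,\partial^x)=\ker(\partial^x)$ and $H_0(V^x,\partial^x)=\mathrm{coker}(\partial^x)$, and similarly for $y$. I would then argue, fiberwise over a point of $M\times M$, via a small diagram chase. For injectivity on $H_1$: $\alpha$ restricted to $\ker(\partial^x)\to\ker(\partial^y)$ is injective iff $\ker(\partial^x)\cap\ker(\alpha_1)=0$, which is the condition defining $U_1$. For surjectivity on $H_0$: the induced map $\mathrm{coker}(\partial^x)\to\mathrm{coker}(\partial^y)$ is surjective iff $\im(\alpha_0)+\im(\partial^y)=V_0^y$, which is the condition defining $U_0$. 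The remaining two halves --- surjectivity on $H_1$ and injectivity on $H_0$ --- I expect to come for free from the commutativity $\alpha_0\partial^x=\partial^y\alpha_1$ together with a dimension count: a chain map of complexes of the same total dimension on each side inducing an injection on $H_1$ and a surjection on $H_0$ is automatically a quasi-isomorphism, by the long exact sequence of the mapping cone (or equivalently the rank-nullity bookkeeping $\dim\ker(\partial^x)+\dim\mathrm{coker}(\partial^x)$ versus the $y$ side). This is why only two of the four natural conditions appear.

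The main obstacle, such as it is, is being careful that the dimension-count shortcut is legitimate: one must note that ``quasi-isomorphism'' in iii) is between bundles whose fibers $V_1^x\oplus V_0^x$ and $V_1^y\oplus V_0^y$ need \emph{not} have the same dimension a priori, so one should not silently assume it. The honest route is to observe that the two conditions $U_1$ and $U_0$, together with $F$, already force $\alpha$ to be a quasi-isomorphism directly: once $\alpha_1|_{\ker\partial^x}$ is injective and $\bar\alpha_0$ on cokernels is surjective, the mapping cone of $\alpha$ has vanishing homology in the relevant degrees, and a two-term-to-two-term chain map with acyclic cone is a quasi-isomorphism --- no comparison of total dimensions needed. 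I would phrase the proof this way to keep it self-contained, doing the cone computation (or the equivalent five-term exact sequence $0\to H_1(V^x)\to H_1(V^y)\to H_*(\mathrm{cone})\to H_0(V^x)\to H_0(V^y)\to 0$) in one short paragraph, and conclude that $(\partial^x,\partial^y,\alpha_0,\alpha_1)\in GL(V)_1$ iff it lies in $F\cap U_1\cap U_0$.
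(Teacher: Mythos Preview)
Your first instinct---the dimension/Euler-characteristic count---is exactly what the paper does, and it is correct. Your worry that the fibers $V_1^x\oplus V_0^x$ and $V_1^y\oplus V_0^y$ ``need not have the same dimension a priori'' is misplaced: $V_0$ and $V_1$ are vector \emph{bundles} over $M$, so their ranks are locally constant and $\dim V_i^x=\dim V_i^y$. The paper's proof is precisely: both $2$-term complexes have Euler characteristic $\dim V_0-\dim V_1$, hence so do their homologies; then $\dim H_1(V^x)\le \dim H_1(V^y)$ (from $U_1$) and $\dim H_0(V^x)\ge \dim H_0(V^y)$ (from $U_0$) force equalities, and injective/surjective linear maps between equal-dimensional spaces are isomorphisms.

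By contrast, your ``honest route'' via the mapping cone has a genuine gap. From injectivity on $H_1$ and surjectivity on $H_0$ you only get $H_2(\mathrm{cone})=0$ and $H_0(\mathrm{cone})=0$; nothing forces $H_1(\mathrm{cone})=0$. In your own five-term sequence
\[
0\to H_1(V^x)\to H_1(V^y)\to H_1(\mathrm{cone})\to H_0(V^x)\to H_0(V^y)\to 0,
\]
the outer maps being mono/epi says nothing about the middle term. A concrete counterexample to ``no comparison of dimensions needed'': take $V_1^x=0$, $V_0^x=\R$, $V_1^y=\R$, $V_0^y=0$, with all maps zero. Then $F$, $U_1$, $U_0$ hold trivially, yet $H_1(\alpha):0\to\R$ and $H_0(\alpha):\R\to 0$ are not isomorphisms. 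What rules this out in the lemma is exactly the equality of ranks coming from $V_i$ being bundles---i.e.\ the Euler-characteristic input you tried to avoid. If you insist on phrasing it via the cone, the alternating sum of dimensions in the five-term sequence gives $\dim H_1(\mathrm{cone})=\chi(V^x)-\chi(V^y)$, which vanishes for that same reason; so the cone argument is not an alternative to the dimension count but a repackaging of it.
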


\begin{proof}
An element $(\partial^x,\partial^y,\alpha_0, \alpha_1)$ belongs to $F$ if and only if the corresponding square of vector space maps commutes, it belongs to $U_1$ if and only if the morphism between the fibers is injective in degree 1 homology, and belongs to $U_0$ if and only if it is surjective in degree 0 homology. Since both fibers $V^x,V^y$, as 2-term complexes, have the same Euler characteristic $\dim V_0-\dim V_1$, then so do their homologies, and therefore the two inequalities $\dim H_1(V^x)\leq\dim H_1(V^y)$ and $\dim H_0(V^x)\geq\dim H_0(V^y)$ imply that $\alpha$ is in fact a quasi-isomorphism.
\end{proof}


The subset $F$ can be seen as the preimage of the zero section of the following map between the total space of vector bundles over $M\times M$, where $E'=[\pi_1^*V_1,\pi_2^*V_0]$.
$$\phi:E\to E' 
\qquad 
\phi(\partial^x,\partial^y,\rho_1,
\alpha_0)=\alpha_0\partial^x-\partial^y\alpha_1$$
This map is quadratic and its rank is not constant in general, as the next example shows.

\begin{example}
Let  $M=\ast$ and $V_0=V_1=\R$. Then $GL(V)_0\cong\R$, $E\cong\R^4$ and $F$ identifies with $\{(x,y,z,w)\in\R^4:xy-zw=0\}$, which is not a submanifold of $\R^4$. This example shows that if we define the general linear 2-category $gl(V)$ as we have defined $GL(V)$, but without imposing the quasi-isomorphism axiom, then $gl(V)$ cannot be made a Lie 2-category in a reasonable way.  
\end{example}


Next we show that the map $\phi$ above has maximal rank over the opens $U_i$, and since the zero section $0_{M\times M}\subset E'$ is closed embedded, the same holds for $GL(V)_1=\phi^{-1}(0_{M\times M})\cap U_1\cap U_0$.

%
%
%

\begin{proposition}\label{prop:maximal}
If $p\in U_1\cup U_0$ and $q=\phi(p)$ then $d\phi_p:T_pE\to T_qE'$ is surjective.
\end{proposition}

\begin{proof}
Let $p=(\partial^x,\partial^y,\alpha_1,\alpha_0)\in U_1$ and let $q=\phi(p)=\alpha_0\partial^x-\partial^y\alpha_1$. The case $p\in U_0$ is analogous. Since $\phi$ commutes with the projections $E\to M\times M$ and $E'\to M\times M$, in order to prove that $d\phi_p:T_pE\to T_qE'$ is surjective, it is enough to show that any vertical vector $v\in T_qE'$ is in the image of $d\phi_p$. Take $v$ such a vertical vector and realize it as the 1-jet of a curve $\gamma$ within the fiber, $\gamma(t)\in E'_{(y,x)}$.
We need to construct a lifted curve $\tilde\gamma(t)\in E$ through $p$. 
Since $\ker\partial^x\cap\ker\alpha_1=0$, the linear map $(\partial^x,\alpha_1):V_1^x\to V_0^x\oplus V_1^y$ is a monomorphism, and it admits a linear retraction $(\tilde \partial^x,\tilde\alpha_1):V_0^x\oplus V_1^y\to V_1^x$. 
Finally, the curve
$$\tilde\gamma(t)=(\partial^x,\partial^y-(\gamma(t)-\gamma(0))\tilde\alpha_1,\alpha_1,\alpha_0+(\gamma(t)-\gamma(0))\tilde\partial^x)$$
satisfies $\phi(\tilde\gamma(t))=\gamma(t)$ and $\tilde\gamma(0)=p$, so it defines a lift, completing the proof.
\end{proof}

%



\begin{theorem}\label{thm:GL(V)}
Given $V=V_1\oplus V_0$ a graded vector bundle, its general linear 2-groupoid $GL(V)$ inherits a natural structure of a Lie 2-groupoid.
\end{theorem}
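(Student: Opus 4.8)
The plan is to assemble the theorem from the pieces already laid out in the section. First I would record the set-theoretic facts established above: $GL(V)$ is a $2$-groupoid in the sense of Section~\ref{sec:basics}, with every $2$-cell invertible and every arrow invertible up to a $2$-cell. So the only task is to exhibit smooth manifold structures on $GL(V)_0$, $GL(V)_1$, $GL(V)_2$ making it a Lie $2$-category, to check the inversion of $2$-cells is smooth, and to verify the two horn-restriction maps $d_{2,0},d_{2,2}\colon N_2G\to N_{2,0}G,N_{2,2}G$ are surjective submersions.

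For the manifold structures: $GL(V)_0$ is the total space of the vector bundle $[V_1,V_0]\to M$, hence a manifold. For $GL(V)_1$, by Lemma~\ref{lemma:equations} we have $GL(V)_1=F\cap U_1\cap U_0$ inside the total space $E$ of a vector bundle over $M\times M$; here $U_1,U_0$ are open (each cut out by a lower-semicontinuous rank condition), and $F=\phi^{-1}(0_{M\times M})$ for the quadratic bundle map $\phi\colon E\to E'$. By Proposition~\ref{prop:maximal}, $\phi$ has maximal rank on $U_1$ (and, by the symmetric argument, on $U_0$), so on the open set $U_1\cap U_0$ the map $\phi$ is a submersion onto a neighbourhood of the zero section; since the zero section $0_{M\times M}\subset E'$ is a closed embedded submanifold, its preimage $GL(V)_1=\phi^{-1}(0_{M\times M})\cap U_1\cap U_0$ is an embedded submanifold of $E$, of the expected dimension. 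Then $GL(V)_2=GL(V)_1\times_{M\times M}[\pi_1^*V_0,\pi_2^*V_1]$, which is a pullback of a vector bundle along the smooth map $GL(V)_1\to M\times M$, hence a manifold (in fact a vector bundle over $GL(V)_1$), and the forgetful map $GL(V)_2\to GL(V)_1$ is a surjective submersion.

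Next I would verify the Lie $2$-category axioms. Source and target $GL(V)_1\to GL(V)_0$ send $(\partial^x,\partial^y,\alpha_0,\alpha_1)$ to $\partial^x$, resp. $\partial^y$; these are restrictions of vector bundle projections, and surjectivity and the submersion property follow because over any pair $(\partial^x,\partial^y)$ one can produce a quasi-isomorphism (e.g. using splittings $V_1^x\cong \ker\partial^x\oplus \im\partial^x$ etc., sending homology to homology by any isomorphism), and local sections are easy to write down; similarly for $s,t\colon GL(V)_2\to GL(V)_1$, which are the bundle projection and the affine map $(\alpha,R)\mapsto (\alpha-R\partial^x,\ \alpha-\ldots)$ wait --- more precisely $R\mapsto \alpha'$ with $\alpha'_1=\alpha_1-R\partial^x$, $\alpha'_0=\alpha_0-\partial^y R$, which is smooth with smooth local sections. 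The units $GL(V)_0\to GL(V)_1$ (taking $\partial^x$ to the identity quasi-isomorphism) and $GL(V)_1\to GL(V)_2$ (taking $\alpha$ to the zero homotopy) are clearly smooth, being restrictions of linear bundle maps. Both compositions are smooth: $\bullet$ is fibrewise addition of the maps $R$, hence linear; $\circ$ is composition of linear maps between fibres, a polynomial map in local trivializations, so smooth. The inversion of $2$-cells sends $(\alpha,R)\mapsto(\alpha',-R)$, again linear in $R$ and smooth. Finally, $d_{2,0}$ and $d_{2,2}$: a point of $N_{2,0}G$ is a pair $(f,h)$ of quasi-isomorphisms with $s(f)=s(h)$, and filling it means choosing $g$ with $t(g)=t(h)$, $s(g)=t(f)$ and a $2$-cell $gf\cong h$; by Lemma~\ref{lemma:factorization} such a $g$ exists, and smoothness of the restriction map plus surjectivity of the submersion $GL(V)_2\to GL(V)_1$ and of $t\colon GL(V)_1\to GL(V)_0$ gives local sections, so $d_{2,0}$ is a surjective submersion, and $d_{2,2}$ is handled symmetrically. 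I expect the main obstacle to be precisely the smoothness of $GL(V)_1$ as a submanifold of $E$ --- the quadratic map $\phi$ has jumping rank globally, so one genuinely needs the combination of Lemma~\ref{lemma:equations} and Proposition~\ref{prop:maximal} to restrict to the opens $U_i$ where $\phi$ becomes a submersion; once that is in place, the remaining verifications are routine checks that the structure maps are polynomial/linear in local trivializations and that the relevant sections exist pointwise.
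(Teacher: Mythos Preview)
Your overall strategy matches the paper's: invoke Lemma~\ref{lemma:equations} and Proposition~\ref{prop:maximal} for the manifold structure on $GL(V)_1$, realize $GL(V)_2$ as a pullback vector bundle over $GL(V)_1$, and then check the structure maps. Where you diverge from the paper is in your assessment of where the difficulty lies.

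First, a small slip: your claim that ``over any pair $(\partial^x,\partial^y)$ one can produce a quasi-isomorphism'' is false. Two $2$-term complexes on fibres of the same graded bundle admit a quasi-isomorphism only when $\partial^x$ and $\partial^y$ have equal rank; the anchor $(s,t)\colon GL(V)_1\to GL(V)_0\times GL(V)_0$ is \emph{not} surjective. Surjectivity of $s$ and $t$ separately follows from the unit section, not from this claim, and the submersion property needs its own argument (it can be extracted from the maximal-rank statement in Proposition~\ref{prop:maximal}, but not in the way you indicate).

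The more substantial gap is your treatment of $d_{2,0}$ and $d_{2,2}$. Lemma~\ref{lemma:factorization} is purely set-theoretic and gives only pointwise surjectivity; your sentence ``smoothness of the restriction map plus surjectivity of the submersion $GL(V)_2\to GL(V)_1$ and of $t\colon GL(V)_1\to GL(V)_0$ gives local sections'' is not an argument. Given $(f,h)\in N_{2,0}G$ one must produce a $g$ with $s(g)=t(f)$, $t(g)=t(h)$ and a $2$-cell $gf\Rightarrow h$, varying smoothly; since $(s,t)$ is not surjective there is no general section to supply such $g$, and nothing in your listed ingredients manufactures one. This is precisely the step the paper treats carefully: for a quasi-isomorphism $\alpha\colon(V^x,\partial^x)\to(V^y,\partial^y)$ one has a short exact sequence
\[
0\longrightarrow V_1^x \xrightarrow{(\alpha_1,\partial^x)} V_1^y\oplus V_0^x \xrightarrow{(\partial^y,-\alpha_0)} V_0^y\longrightarrow 0,
\]
and a smooth (in $t$) choice of retraction $(\tilde\alpha_1,\tilde\partial^x)$ and section $(\tilde\partial^y,\tilde\alpha_0)$ yields explicit formulas $\beta_i=\gamma_i\tilde\alpha_i$ and $R=\gamma_1\tilde\partial^x$ for the missing edge and $2$-cell. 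This construction, not the abstract submersions you cite, is what makes $d_{2,0}$ a surjective submersion; contrary to your closing remark, it is the heart of the proof rather than a routine check.
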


\begin{proof}
As we have already discussed, $GL(V)_0$ identifies $[V_1,V_0]$,  $GL(V)_1\subset E$ with the preimage of a closed embedded submanifold along a maximal rank map, and $GL(V)_2$ is a fiber product along a submersion.
It is straightforward to check that with these definitions the structure maps of $GL(V)$ are smooth, including the inversion of 2-cells. It only remains to show that the following restriction maps are surjective submersions:
$$d_{2,0}:N_2G\to N_{2,0}G \qquad d_{2,2}:N_2G\to N_{2,2}G$$
Let us show it for $d_{2,0}$, the other case is analogous. We argue again by lifting curves. 
We start with $\alpha(t):\partial^{x(t)}\to \partial^{y(t)}$ and $\gamma(t):\partial^{x(t)}\to \partial^{z(t)}\in GL(V)_1$, defining a curve on $N_{(2,0)}G$, and in order to lift it to $N_2G$, we want to define $\beta(t):\partial^{y(t)}\to\partial^{z(t)}$ and $R(t):\gamma(t)\then\beta(t)\alpha(t)$. 
Working locally we can again assume $x=x(t),y=y(t),z=z(t)$ are fixed. The monomorphism $(\alpha_1(t),\partial^x(t)):V^x_1\to V_1^y\oplus V_0^y$ admits a retraction $\tilde\alpha_1(t),\tilde\partial^x(t)$, and by basic arguments on linear algebra, we can take it smooth on $t$. 
Then the following short exact sequence splits smoothly,
$$0\to V_1^x \xto{(\alpha_1(t),\partial^x(t))} V_1^y\oplus V_0^y \xto{(\partial^y(t),\alpha_0(t))} V_0^y\to 0$$
and we gain a section $(\tilde\partial^y(t),\tilde\alpha_0(t))$. We can then define $\beta_i(t)=\gamma_i(t)\tilde\alpha_i(t)$ and $R(t)=\gamma_1(t)\tilde\partial^x$.
\end{proof}




%

\def\pair{{\rm Pair}}


\begin{remark}
Let us denote by $GL'(V)\subset GL(V)$ the open sub-2-groupoid with the same objects, arrows the invertible chain maps, and 2-cells the chain homotopies. This is a strict Lie 2-groupoid, somehow simpler than our version, and both agree around the units, thus both should behave in the same way with respect to {\em differentiation}, even though this process is not yet clear. See \cite{sz} for a related discussion. But regarding our purposes, this simpler construction $GL(V)'$ is not satisfactory, as there are representations up to homotopy of Lie groupoids which involve not invertible chain maps between the fibers (see Examples \ref{ex:tautological1} and \ref{ex:tautological2}).
\end{remark}


\section{The nerve of a Lie 2-category}


We deal here with the problem of endowing the nerve $NC$ of a Lie 2-category $C$ with a reasonable smooth structure. We show with a simple example that for general $C$ this may not be possible. Our second main theorem shows that if every 2-cell is invertible then $NC$ is indeed a simplicial manifold, and this happens for instance if $C$ is a Lie 2-groupoid. 

\bigskip


Given $C$ a Lie 2-category, we define its {\bf ambient} simplicial manifold $AC$ for the nerve $NC$, roughly speaking, by considering arbitrary collections $\{u_{k,j,i}\}$ of 2-cells and disregarding any compatibility. More precisely, we define $AC$ by
$$A_nC=\prod_{[2]\xto{a}[n]} C_2
\qquad
u\in A_nC,\ b:[m]\to[n]\ \then\ b^*(u)_a=u_{b\circ a}\in A_mC$$
This way $AC$ is a well-defined simplicial manifold, and every face map is a surjective submersion, for it is just the projection onto some of the coordinates.
There is a {\bf canonical inclusion} $\phi:NC\to AC$ defined by the formula $\phi(u)_{a} = (u\circ a)_{2,1,0}$, where $u\in N_nC$, $u:[n]\dashto C$, and $a:[2]\to[n]$. In other words, $\phi(u)$ keeps track of the 2-cells corresponding to each triangle, and by means of the identities, the arrows on the edges and the objects on the vertices. Since every simplex in $NC$ is determined by its 2-skeleton, the map $\phi$ is injective. We are concerned with the question of whether $\phi(N_nC)\subset A_nC$ is a submanifold, which is not the case in general.



\begin{example}
Let $(\R,\cdot)$ be the multiplicative monoid viewed as a Lie 2-category as described in Example \ref{example:deloopingR}.
Then $N_0C=\{\ast\}$, $N_1C=\{\id_\ast\}$, and $N_2C=\R$, but $N_3C\subset A_3C$ is not a submanifold. Disregarding the degenerate coordinates, we can identify $N_3C$ with tuples $(x,y,z,w)\in \R^4$ such that $xy=zw$, the equation corresponding  to the commutativity of the tetrahedron.  
\end{example}


For $C$ a 1-category, a simplex $u\in N_nC$ is the same as a chain of $n$ composable arrows, so we can write $N_nC$ as an iterated fiber product, and use this to define a smooth structure on it. Next we develop a similar combinatorial description for simplices $u\in N_nC$, where $C$ is a 2-category whose 2-cells are invertible. 

We see $\Delta^{n-1}$ inside $\Delta^{n}$ by using the face $d_n$, and define a decreasing filtration 
$$\Delta^n=F_0\Delta^n\supset F_1\Delta^n \supset\dots\supset F_{n-1}\Delta^n\supset \Delta^ {n-1}$$
by setting $F_k\Delta^n=\{a:[m]\to[n]/a(m)<n \text{ or } a(0)\geq k\}$, namely $F_k\Delta^n$ is the union of $\Delta^{n-1}$ with the last face of dimension $k$.
As an example, we depict the filtration for $n=3$:
$$\begin{matrix}
\begin{tikzpicture}[thick,fill opacity=.1,text opacity=1]
\draw[fill](0,0) node[below left] {0}--(2,0) --(1,2)--cycle;
\draw[fill](1,2) node[above] {3}--(2,0) node[below right] {1} --(2.3,1) 
node[right] {2}--cycle ;
\draw[dotted](0,0)--(2.3,1);
\end{tikzpicture}
&
\begin{tikzpicture}[thick,fill opacity=.1,text opacity=1]
\draw[fill](5,2) node[above] {3}--(6,0) 
node[below right] {1} --(6.3,1) node[right] {2}--cycle ;
\fill (4,0) node[below left] {0}--(6,0)--(5.642,0.714)--cycle;
\draw(4,0)--(6,0);
\draw(4,0)--(5.642,0.714);
\draw[dotted](5.642,0.714)--(6.3,1);
\end{tikzpicture}
&
\begin{tikzpicture}[thick,fill opacity=.1,text opacity=1]
\draw[fill](8,0) node[below left] {0} --
(10,0) node[below right]{1}--(10.3,1) node[right]{2}--cycle;
\draw(10.3,1)--(9,2) node[above]{3};
\end{tikzpicture}
 \\
 F_0\Delta^3 & F_1\Delta^3 & F_2\Delta^3 
\end{matrix}
$$
Define $N^k_nC=\{F_k\Delta^n\to NC\}$. 
Note that $N^0_nC=N_nC$, that we have projections $N^k_nC\to N^{k+1}_nC$,  and that $N^{n-1}_nC=N_{n-1}\times_{C_0}C_1$ is the set-theoretic fiber product over $u\mapsto u_n$ and $s$. 


\begin{proposition}
If every 2-cell of $C$ is invertible then there are set-theoretic fiber products:
$$
\begin{matrix}\xymatrix{
\ar[r];[dr]^t \ar[d];[rd]_{\phi^{k}_n} \ar[r] \ar[d]
N^{k-1}_nC & C_2 \\ N^{k}_n C & C_1
}
\end{matrix}
\qquad
\phi^k_n(u)=u_{n,k}\circ u_{k,k-1}
$$
\end{proposition}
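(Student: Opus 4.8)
The plan is to prove the statement by analysing, combinatorially, exactly which simplices of $\Delta^n$ are added when one enlarges $F_k\Delta^n$ to $F_{k-1}\Delta^n$, and then using that $NC$ is $3$-coskeletal so that only the $3$-skeleton and the tetrahedron equations are at stake.

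First I would unpack the filtration step. Since $F_j\Delta^n=\Delta^{n-1}\cup\Delta^{\{j,\dots,n\}}$, the non-degenerate simplices of $F_{k-1}\Delta^n$ not already in $F_k\Delta^n$ are precisely those whose vertex set contains both $k-1$ and $n$: the edge $\{k-1,n\}$, the triangles $\{k-1,j,n\}$ for $k\le j\le n-1$, and the tetrahedra $\{k-1,a,b,n\}$ for $k\le a<b\le n-1$ (higher simplices being irrelevant by $3$-coskeletality). Consequently, over a fixed $v\in N^k_nC$, an element $u\in N^{k-1}_nC$ lying over $v$ amounts to the new arrow $e:=u_{n,k-1}\in C_1$ and the new $2$-cells $u_{n,j,k-1}\in C_2$ ($k\le j\le n-1$), subject to the evident source/target constraints and to the tetrahedron equations of the new tetrahedra. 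The triangle $\{k-1,k,n\}$ carries the $2$-cell $R:=u_{n,k,k-1}$, with $s(R)=e$ and $t(R)=u_{n,k}\circ u_{k,k-1}=\phi^k_n(v)$; because every $2$-cell of $C$ is invertible, recording the pair $(e,R)$ is the same as recording $R$ alone with $t(R)=\phi^k_n(v)$, and then $e=s(R)$. This already produces the two legs of the claimed square, namely the projection $N^{k-1}_nC\to N^k_nC$ and the map $u\mapsto u_{n,k,k-1}$ to $C_2$, and shows that any $u$ lying over $(v,R)$ recovers $e$.

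Next I would determine the remaining new $2$-cells and check that no further constraints survive. For each $k<j\le n-1$, the tetrahedron $\{k-1,k,j,n\}$ lies in $F_{k-1}\Delta^n\setminus F_k\Delta^n$, and its tetrahedron equation writes $u_{n,j,k-1}$ in terms of $R$, the arrows $u_{n,j}$, $u_{k,k-1}$, and the old $2$-cells $u_{j,k,k-1}$ and $u_{n,j,k}$ (all part of $v$); invertibility of $2$-cells makes this equation uniquely solvable for $u_{n,j,k-1}$, whose source is $s(R)=e$ and target $u_{n,j}\circ u_{j,k-1}$, as required. This gives uniqueness of the extension $u$ over $(v,R)$, i.e. injectivity of the comparison map $N^{k-1}_nC\to N^k_nC\times_{\phi^k_n,\,C_1,\,t}C_2$. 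For surjectivity I would take any $(v,R)$ with $t(R)=\phi^k_n(v)$, define $u$ by the formulas above, and verify that $u$ is genuinely a simplicial map $F_{k-1}\Delta^n\to NC$, which by $3$-coskeletality reduces to checking every new tetrahedron equation: those for the tetrahedra $\{k-1,k,j,n\}$ hold by construction, and for a general $\{k-1,a,b,n\}$ with $k<a<b\le n-1$ one inspects the $4$-simplex $\{k-1,k,a,b,n\}$, whose remaining four tetrahedral faces $\{k,a,b,n\}$, $\{k-1,k,a,b\}$ (both inside $F_k\Delta^n$, hence valid for $v$) and $\{k-1,k,b,n\}$, $\{k-1,k,a,n\}$ (valid by the previous step) satisfy their equations; by the same cube argument used in the proof of Proposition \ref{prop:bridge} --- four commuting faces together with the interchange face force the sixth --- the equation for $\{k-1,a,b,n\}$ follows. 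Combining injectivity and surjectivity yields the asserted fiber product $N^{k-1}_nC\cong N^k_nC\times_{\phi^k_n,\,C_1,\,t}C_2$.

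The bookkeeping of which simplices are new, and of source/target compatibility, is routine; the step I expect to be the genuine obstacle is the verification of the leftover tetrahedron equations for $\{k-1,a,b,n\}$ with $a>k$, which is exactly where the invertibility hypothesis on the $2$-cells is indispensable and which I would handle via the $4$-simplex/cube coherence.
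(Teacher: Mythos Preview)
Your proposal is correct and follows essentially the same line as the paper's own proof: identify the new cells in $F_{k-1}\Delta^n\setminus F_k\Delta^n$, record only the $2$-cell $R=u_{n,k,k-1}$ over the triangle $\{k-1,k,n\}$, solve for each remaining $u_{n,j,k-1}$ via the tetrahedron $\{k-1,k,j,n\}$, and then verify the leftover tetrahedra $\{k-1,a,b,n\}$ with $a>k$ by the cube argument applied to the $4$-simplex $\{k-1,k,a,b,n\}$. The paper carries out the identical steps (with the index shift $k\mapsto k+1$), invoking invertibility of $2$-cells at exactly the same two places you do.
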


\begin{proof}


The inclusion $F_{k+1}\Delta^n\to F_{k}\Delta^n$ has all the vertices on its image, all the edges except for $(n,k)$, and all the triangles except for $(n,l,k)$, with $k<l<n$. Thus, given $u:F_{k}\Delta^n\to NC$, if we know its restriction $u'$ to $F_{k+1}\Delta^n$ and the 2-cell $\alpha$ corresponding to the triangle $(n,k+1,k)$, then we have all the vertices, we recover the edge $(n,k)$ as the source of $\alpha$, and we recover the  2-cells corresponding to the triangles $(n,l,k)$ inductively on $l-k$ by means of the equation:
$$u_{n,l,k} =
(u_{n,l}\circ u_{l,k+1,k})^{-1}\bullet(u_{n,l,k+1}\circ u_{k+1,k})\bullet 
u_{n,k+1,k}$$
This shows that the map $N^k_nC \to N^{k+1}_nC\times_{C_1}C_2$ is injective. 


To see that it is also surjective, we need to check that, given $u':F_{k+1}\Delta^n\to NC$ and given $\alpha:u'_{n,k}\then u'_{n,k+1}u'_{k+1,k}$, the above equations can be used to define a simplicial map $u:F_{k}\Delta^n\to NC$. This translates into showing that for every tetrahedron $(l,k,j,i)$ the above equation holds. The only tetrahedrons that deserve an explanation are those of the type $(n,l',l,k)$ with $k<l<l'<n$. Moreover, if $l=k+1$ then the equation holds by the construction of $u$. So let us assume that $k+1<l$. The 4-simplex corresponding to $(n,l',l,k+1,k)$ yields a cube as below:
%
$$\xymatrix{
& u_{n,k} \ar@{=>}[rd] \ar@{=>}[dl] \ar@{=>}[d]& \\
u_{n,k+1}u_{k+1,k} \ar@{=>}[d] \ar@{=>}[dr] & 
u_{n,l}u_{l,k} \ar@{=>}[ld]|\hole \ar@{=>}[rd]|\hole & u_{n,l'}u_{l',k} 
\ar@{=>}[d] \ar@{=>}[dl] \\
u_{n,l}u_{l,k+1}u_{k+1,k} \ar@{=>}[dr] & u_{n,l'}u_{l',k+1}u_{k+1,k} \ar@{=>}[d] & 
u_{n,l'}u_{l',l}u_{l,k} \ar@{=>}[dl] \\
& u_{n,l'}u_{l',l}u_{l,k+1}u_{k+1,k}
}$$
We want to see that the back right face commutes. But we know that: the back left face commutes by definition of $u_{n,l,k}$; the upper face commutes by definition of $u_{n,l',k}$; the left front face commutes for it factors through $u_{k+1,k}$; the right front face commutes for it factors through $u_{n,l'}$; and the bottom face commutes for $\circ$ and $\bullet$ are mutually distributive. Hence the result.
\end{proof}


We can now prove our second main theorem.

\begin{theorem}\label{thm:NG}
Given $C$ a Lie 2-category, if its 2-arrows are (smoothly) invertible, then its nerve $NC$ is naturally a simplicial manifold.
\end{theorem}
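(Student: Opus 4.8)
The plan is to build the smooth structure on $NC$ inductively in the simplicial degree $n$, using the combinatorial description of $N_nC$ supplied by the preceding proposition. For $n=0,1,2$ there is nothing to do: $N_0C=C_0$, $N_1C=C_1$, and $N_2C$ is the manifold $N_2C$ already constructed in Section~\ref{sec:basics} as a fibered product of $C_2$ with $N_{2,1}C$ along submersions. Assume now that $N_{n-1}C$ has been endowed with a manifold structure for which all face and degeneracy maps are smooth and $u\mapsto u_n$ is a submersion (so that the relevant fibered products exist). I would then descend the filtration $F_0\Delta^n\supset F_1\Delta^n\supset\dots\supset F_{n-1}\Delta^n\supset\Delta^{n-1}$, equipping each $N^k_nC$ with a manifold structure starting from $k=n-1$ and decreasing $k$.

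The base of this secondary induction is $N^{n-1}_nC=N_{n-1}C\times_{C_0}C_1$, the fibered product of the submersion $u\mapsto u_{n}$ (or $s$) with $s:C_1\to C_0$; since $s$ is a surjective submersion this is a manifold. For the inductive step I invoke the proposition: $N^{k-1}_nC$ is the set-theoretic fibered product of $N^k_nC$ and $C_2$ over $C_1$, along the map $\phi^k_n(u)=u_{n,k}\circ u_{k,k-1}$ on one side and $t:C_2\to C_1$ on the other. Since $t:C_2\to C_1$ is a surjective submersion by the axioms of a Lie 2-category, the fibered product is automatically a manifold, and the projection $N^{k-1}_nC\to N^k_nC$ is a surjective submersion (pullback of $t$). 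Iterating down to $k=0$ gives $N^0_nC=N_nC$ a manifold structure, and the composite $N_nC\to N^{n-1}_nC\to N_{n-1}C$ (projecting onto the restriction to $\Delta^{n-1}$, which is exactly $d_n$) is a composite of submersions, hence a submersion. One checks similarly that $u\mapsto u_n$ remains a submersion, closing the induction.

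It then remains to verify that all the simplicial operators $d_i$ and $s_j$ are smooth for this structure, and that the structure is independent of the choice of "last vertex" used to set up the filtration (equivalently, that it agrees with the canonical inclusion $\phi:NC\to AC$ realizing $N_nC$ as a subset of $\prod C_2$). For the simplicial identities I would argue that each $d_i$ and $s_j$, read through the defining fibered-product charts, is built from the smooth structure maps $\circ,\bullet,u,s,t$ and the inversion $i:C_2\to C_2$ of 2-cells — here the hypothesis that the 2-arrows are smoothly invertible is used, since the formula $u_{n,l,k}=(u_{n,l}\circ u_{l,k+1,k})^{-1}\bullet(u_{n,l,k+1}\circ u_{k+1,k})\bullet u_{n,k+1,k}$ from the proposition involves an inverse 2-cell. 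Because $NC$ is $3$-coskeletal, for $n\geq 4$ one may alternatively identify $N_nC=\{\partial\Delta^n\to NC\}$ as an iterated fibered product of the $N_3C$'s along submersions, which gives another route to the manifold structure and an immediate check of coskeletality-compatibility.

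The main obstacle is not the existence of the manifold structures — that is forced, degree by degree, once one has the fibered-product description and knows $t:C_2\to C_1$ is a submersion — but the \emph{coherence and well-definedness}: showing that the manifold structure produced by the filtration does not depend on auxiliary choices, and that it makes \emph{every} face and degeneracy smooth simultaneously (not just the particular $d_n$ built into the filtration). I expect this to reduce to a careful but routine bookkeeping argument: expressing, in the charts coming from the proposition, each simplicial operator as a composition of the (smooth) structure maps of the Lie 2-category together with $i:C_2\to C_2$, and invoking uniqueness of fibered products to match the structure obtained from filtrations based at different vertices. The invertibility of the 2-cells is exactly what makes these expressions smooth, which is why the theorem fails, as the $(\R,\cdot)$ example shows, without that hypothesis.
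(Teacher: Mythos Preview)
Your proposal is correct and follows essentially the same route as the paper: build the manifold structure on $N_nC$ inductively by descending the filtration $N^{n-1}_nC\to\cdots\to N^0_nC=N_nC$, using at each step that the proposition exhibits $N^{k-1}_nC$ as a fibered product along the surjective submersion $t:C_2\to C_1$; then handle coherence by relating everything to the canonical inclusion $\phi:NC\to AC$ into the ambient.

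The only difference is emphasis. Where you flag the coherence step as ``careful but routine bookkeeping'' and leave it at that, the paper actually carries it out: it shows directly that each coordinate projection $\phi_{k,j,i}:N_nC\to C_2$ is smooth (by induction for $k<n$, tautologically for the $(n,i{+}1,i)$ coordinates coming from the filtration, and for the remaining $(n,j,i)$ by writing them via the formula $u_{n,l,k}=(u_{n,l}\circ u_{l,k+1,k})^{-1}\bullet(u_{n,l,k+1}\circ u_{k+1,k})\bullet u_{n,k+1,k}$ in terms of coordinates already known to be smooth). Once $\phi$ is a closed embedding, both the independence of the filtration and the smoothness of \emph{all} simplicial operators follow for free, since the face and degeneracy maps of $AC$ are manifestly smooth. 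This is cleaner than checking each $d_i,s_j$ separately in the chart, and it is exactly the place where the smooth invertibility of 2-cells is used, as you note. Your alternative remark about using 3-coskeletality for $n\geq 4$ is valid but not needed once the embedding into $AC$ is in hand.
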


\begin{proof}
We endow each $N_nC$ with a smooth structure inductively. For $n=0,1$ we do this by means of the obvious identifications $N_0C=C_0$ and $N_1C=C_1$. For larger $n$ we use the filtration and fiber products of the previous proposition, noting that one of the maps is always a surjective submersion, and using the standard transversality criterion. Hence $N_nC$ is a closed embedded submanifold of the product
$$N_nC\subset N_{n-1}C\times \prod_{(i+1,i)}C_1\times\prod_{(n,i+1,i)} C_2$$
We will prove that, for these smooth structures, the canonical inclusion $\phi:N_nC\to A_nC$ into the ambient is a closed embedding. This implies that (i) the smooth structures that we have defined on $N_nC$ do not depend on the particular filtration we have used, and that (ii) the simplicial maps on $NC$ are smooth and $NC$ is a simplicial manifold.

For each triple $(k,j,i)$, we have to show that the composition
$\phi_{k,j,i}=\pi_{k,j,i}\phi:N_nC\to A_nC\to C_2$ is smooth. By projecting on the first coordinate of the above product, and using an inductive argument, we solve the case $n>k$. By projecting on the other coordinates we solve the cases $(n,i+1,i)$. It remains to study the other projections $\phi_{n,j,i}$. But such a projection can be written as an expression involving the other coordinates and the multiplications $\circ$ and $\bullet$, which are smooth. A similar argument applies also to the degenerate coordinates.
\end{proof}


It follows from our theorem that the nerve of a Lie 2-groupoid is a simplicial manifold, and that a smooth pseudofunctor $\phi:G\dashto G'$ is the same as a simplicial smooth map $\phi:NG\to NG'$. Next we present a less immediate corollary.

\begin{corollary}
With the above hypothesis, the face maps $d_i:N_nC\to N_{n-1}C$ are surjective submersions.
\end{corollary}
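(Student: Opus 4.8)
The plan is to leverage the fiber-product description of $N_nC$ established in the proposition preceding Theorem~\ref{thm:NG}. Recall that we have a tower of set-theoretic fiber products
$$N_nC = N^0_nC \to N^1_nC \to \cdots \to N^{n-1}_nC = N_{n-1}C\times_{C_0}C_1,$$
and that each step $N^{k-1}_nC \to N^k_nC\times_{C_1}C_2$ is an isomorphism, realizing $N^{k-1}_nC$ as the pullback of $t\colon C_2\to C_1$ along $\phi^k_n$. Since $t\colon C_2\to C_1$ is a surjective submersion (this is part of the definition of a Lie 2-category), each projection $N^{k-1}_nC\to N^k_nC$ is a surjective submersion, being a base change of one. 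Composing these, the bottom projection $N_nC\to N^{n-1}_nC = N_{n-1}C\times_{C_0}C_1$ is a surjective submersion; and projecting further onto the first factor, $N_{n-1}C\times_{C_0}C_1\to N_{n-1}C$ is a base change of the submersion $s\colon C_1\to C_0$, hence again a surjective submersion. The composite of these is precisely $d_n\colon N_nC\to N_{n-1}C$, so this face map is a surjective submersion.

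First I would record carefully that the smooth structures on the $N^k_nC$ used in the proof of Theorem~\ref{thm:NG} are exactly the fiber-product structures coming from the proposition, so that ``surjective submersion'' can be checked stepwise as above; this is immediate from the inductive construction in that proof. Next I would observe that this handles only the \emph{last} face map $d_n$, and then argue that all the other face maps $d_i$ for $0\le i < n$ are handled by a symmetry argument: pre-composition with any injective order-preserving map $[n-1]\to[n]$ induces a smooth simplicial operator $N_nC\to N_{n-1}C$ (by Theorem~\ref{thm:NG}, $NC$ is a simplicial manifold), and for each $d^i$ one can find an order-automorphism... no — $\Delta$ has no nontrivial automorphisms of $[n]$. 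So instead I would run the same filtration argument but using a \emph{different} face $d_i$ (rather than $d_n$) to build the filtration: by symmetry of the combinatorics, one gets an analogous tower of fiber products exhibiting $d_i$ as a composite of base changes of $s$ and $t$. Alternatively, and more cleanly, one notes that $d_i = d_n\circ \sigma$ for a suitable composite of degeneracies and other faces is false in general; the honest route is to redo the proposition with the role of the vertex $n$ played by an arbitrary vertex, which goes through verbatim.

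The main obstacle is precisely this last point: the proposition and the proof of Theorem~\ref{thm:NG} are written using the specific filtration anchored at the last vertex $n$, so strictly speaking one only gets $d_n$ for free. The honest fix is to remark that the entire combinatorial setup — the filtration $F_k\Delta^n$, the fiber-product proposition, and hence the smoothness and submersivity — is equivariant under relabeling of the vertices of $\Delta^n$ by any order isomorphism of posets; deleting any vertex $i$ is conjugate, under such a relabeling, to deleting the vertex $n$. Since the smooth structure on $N_nC$ is independent of the filtration (this independence is asserted in the proof of Theorem~\ref{thm:NG} via the closed embedding into the ambient $A_nC$), each $d_i\colon N_nC\to N_{n-1}C$ is a composite of base changes of the surjective submersions $s$ and $t$ along smooth maps, hence a surjective submersion. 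I expect the write-up to be short once this relabeling remark is stated; the only care needed is to invoke the filtration-independence of the smooth structure so that the ``anchored at vertex $i$'' computation and the ``anchored at vertex $n$'' computation refer to the same manifold $N_nC$.
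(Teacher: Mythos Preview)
Your proposal is correct and follows essentially the same route as the paper: handle $d_n$ via the given filtration tower (each step a base change of a surjective submersion), then treat the remaining $d_i$ by rerunning the filtration argument anchored at the $i$-th face instead of the $n$-th, invoking the filtration-independence of the smooth structure on $N_nC$. One phrasing quibble: in your final paragraph you appeal to ``relabeling by order isomorphisms of posets,'' which you yourself correctly rejected a few lines earlier (there are no nontrivial ones on $[n]$); the operative mechanism is not a symmetry of $[n]$ but simply that the combinatorics of the proposition can be rerun with any chosen face playing the role of $d_n\Delta^{n-1}$ --- exactly what the paper says.
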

\begin{proof}
This is more a corollary of the proof rather than of the statement.
When $i=n$ it follows by factoring $d_n$ through the filtration, for each projection $N^k_nC\to N^{k+1}_nC$ is the base-change of a surjective submersion, as well as $N_n^{n-1}C\to N_{n-1}C$. 
When $i\neq n$ we can argue similarly, but now using a different filtration of $\Delta^n$, by complexes containing the face $d_i(\Delta^{n-1})$. 
\end{proof}


We finish this section by developing a smooth version of  \ref{prop:bridge}, setting a bridge between our theory and that of weak Lie $2$-categories and weak Lie $2$-groupoids, as defined in \cite{h,z}. A simplicial manifold $X$ is a {\bf weak Lie $m$-category} or a {\bf weak Lie $m$-groupoid} if the corresponding restrictions maps $X_{n}\to X_{n,k}$ are surjective submersions, for some reasonable smooth structure on the space of $(n,k)$-horns. 
The space of horns $X_{n,k}$ can be expressed as an equalizer
$$\prod_{i\neq k} X_{n-1}\toto \prod_{i,j\neq k} X_{n-1},$$
which may not exist in the category of manifolds. In general this is proved by an inductive argument. In our case, when $X=NC$ is the nerve of a Lie 2-category with invertible 2-arrows, it follows from our construction that
$X_{n}\to \prod_{i\neq k} X_{n-1}$
is a closed embedded submanifold for $n>3$ and for $n=3,k=2$. The case $n=3,k=1$ follows by using a symmetric filtration on the simplex. Therefore, since $X_n$ is also a set-theoretic equalizer, we conclude that $X_n\cong X_{n,k}$ is a diffeomorphism in these cases. The case $n=2$ is easy, and therefore we can conclude:

\begin{proposition}
Let $C$ be a Lie 2-category on which every 2-arrow is invertible.
Then $NC$ is a weak Lie 2-category. Moreover, $NC$ is a weak Lie 2-groupoid if and only if $C$ is a Lie 2-groupoid.
\end{proposition}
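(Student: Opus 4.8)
The statement has two parts: (i) that $NC$ is always a weak Lie 2-category when $C$ is a Lie 2-category with invertible 2-arrows, and (ii) the equivalence ``$NC$ is a weak Lie 2-groupoid $\iff$ $C$ is a Lie 2-groupoid''. Part (i) is already essentially established by the discussion immediately preceding the statement, so the proof should first recall that chain of reductions: the horn space $X_{n,k}$ is the set-theoretic equalizer of $\prod_{i\neq k}X_{n-1}\toto\prod_{i,j\neq k}X_{n-1}$, and by Theorem~\ref{thm:NG} together with the filtration argument in its proof, $X_n=N_nC$ embeds as a closed submanifold of $\prod_{i\neq k}X_{n-1}$ for $n>3$ and for $n=3,k=2$, with the case $n=3,k=1$ handled by the symmetric filtration and the case $n=2$ done by hand. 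Hence $X_n\to X_{n,k}$ is a diffeomorphism for the inner horns in these ranges, which is what ``weak Lie 2-category'' demands — it only asks for inner horn fillers, and uniqueness for $n>2$ is automatic by 3-coskeletality. So part (i) is a matter of assembling statements already in the excerpt.

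For part (ii), I would proceed by combining Proposition~\ref{prop:bridge} (the purely set-theoretic version) with the smooth refinements. For the forward direction, suppose $NC$ is a weak Lie 2-groupoid. Then in particular it is a weak 2-groupoid set-theoretically, so by Proposition~\ref{prop:bridge} $C$ is a 2-groupoid; it remains to check the smoothness conditions in the definition of Lie 2-groupoid, namely that $i:G_2\to G_2$ is smooth (immediate, as $C$ is a Lie 2-category with smoothly invertible 2-arrows by hypothesis) and that $d_{2,0}:N_2C\to N_{2,0}C$ and $d_{2,2}:N_2C\to N_{2,2}C$ are surjective submersions. But these are exactly the $(2,0)$- and $(2,2)$-horn maps, so surjective-submersivity is part of the hypothesis that $NC$ is a weak Lie 2-groupoid. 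For the converse, suppose $C$ is a Lie 2-groupoid. By part (i) the inner horn maps are diffeomorphisms, so I only need the outer ($k=0$ and $k=n$) horn maps to be surjective submersions. For $n=2$ this is the definition of Lie 2-groupoid. For $n\geq 4$ these are again closed embeddings that are set-theoretic bijections onto the equalizer, hence diffeomorphisms, by the same coskeletality-plus-filtration reasoning. The genuinely new case is $n=3$, $k=0$ and $k=3$: here I would mimic the inner-horn filtration argument but with a filtration of $\Delta^3$ adapted to the outer horn, using Lemma~\ref{lemma:factorization} (equivalently the surjective-submersivity of $d_{2,0},d_{2,2}$ from the definition of Lie 2-groupoid) to recover the missing face by factoring a triple composition — this is the smooth analogue of the outer-horn part of the proof of Proposition~\ref{prop:bridge}, and the point is that $R_f$ being an equivalence is now upgraded to the relevant restriction map being a surjective submersion.

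\textbf{The main obstacle.} The delicate point is the outer $3$-horn case in the converse direction: unlike the inner horns, the relevant restriction map $N_3C\to N_{3,0}C$ is not a diffeomorphism (it has genuine fibers, reflecting the non-uniqueness of quasi-inverses up to homotopy), so I cannot simply invoke coskeletality. I must instead exhibit it as a composite of base-changes of surjective submersions along a suitable filtration $\Delta^3=F_0\supset F_1\supset\cdots$ chosen so that each successive inclusion drops either a $2$-cell that can be recovered up to the outer-horn data or a face whose recovery is governed by $d_{2,0}$ (resp.\ $d_{2,2}$). Getting this filtration right, and checking that at each stage the map is pulled back from $d_{2,0}\colon N_2C\to N_{2,0}C$ or from an already-established submersion, is where the real work lies; everything else is bookkeeping on top of Theorem~\ref{thm:NG} and Proposition~\ref{prop:bridge}.
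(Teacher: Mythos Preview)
Your outline for part~(i) and for the forward implication in~(ii) is correct and matches the paper's argument. You are also right that the outer $(3,k)$-horns are the one case not covered by the discussion preceding the proposition; the paper itself leaves this case implicit. But your analysis of that case contains an error that undermines the proposed strategy.

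You write that $N_3C\to N_{3,0}C$ ``is not a diffeomorphism (it has genuine fibers, reflecting the non-uniqueness of quasi-inverses up to homotopy)''. This is false. By the proof of Proposition~\ref{prop:bridge}, every $(3,k)$-horn in a 2-groupoid has a \emph{unique} filling: Lemma~\ref{lemma:factorization} makes $R_{u_{1,0}}$ an equivalence of categories, hence fully faithful on 2-cells, so the missing 2-cell $u_{3,2,1}$ is uniquely determined by
\[
u_{3,2,1}\circ u_{1,0}=(u_{3,2}\circ u_{2,1,0})\bullet u_{3,2,0}\bullet (u_{3,1,0})^{-1}.
\]
The non-uniqueness of quasi-inverses is a feature of the $(2,0)$- and $(2,2)$-horns, not of the $3$-horns. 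So $N_3C\to N_{3,0}C$ is a \emph{bijection}, and the task is to show it is a diffeomorphism.

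This also rules out your method as stated: a composite of base-changes of $d_{2,0}$ (which has positive-dimensional fibers) would itself have positive-dimensional fibers, contradicting bijectivity. What is actually required is to show that recovering $u_{3,2,1}$ from the right-hand side above is \emph{smooth} --- i.e., that whiskering by $u_{1,0}$ on 2-cells admits a smooth inverse on its image --- or, equivalently, that the bijection $N_3C\to N_{3,0}C$ is an immersion (hence a diffeomorphism by equality of dimensions). The Lie-2-groupoid axiom on $d_{2,0}$ may well enter here, but not through a naive pullback square of the kind you describe.
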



\begin{remark}
The main theorem on \cite{mt} shows that if $G$ is a strict Lie 2-groupoid then $TN^2G$ is a weak Lie 2-groupoid.
Thus, in light of the isomorphism described in \ref{lemma:nerves}, our theorem can be regarded as an extension of that to non-strict Lie 2-groupoids. This is crucial for us, for our fundamental example $GL(V)$ is not strict.
\end{remark}
%



\section{Representations as pseudo-functors}


In this section we review the notion of representation up to homotopy $G\action V$ of a Lie groupoid $G$, the particular case of 2-term vector bundles $V=V_1\oplus V_0$, and present our main theorem, stating a 1-1 correspondence between representations $G\action V$ and pseudo-functors $G\dashto GL(V)$. 

\medskip


Given $G\toto M$ a Lie groupoid and $E\to M$ a vector bundle, a {\bf representation} $G\action E$ can be defined as a map 
$\rho:G\times_M E\to E$, $\rho(y\xfrom g x, e)=\rho_g(e)$, such that (i)
$\rho_g:E_x\to E_y$ is linear, (ii) $\rho_{\id}=\id$, and (iii) $\rho_h\rho_g=\rho_{hg}$. A {\bf pseudo-representation} is a sort of non-associative action, it is defined analogously but just requiring (i) and (ii).


\begin{example}
If $G\toto \ast$ is a Lie group, viewed as a Lie groupoid with a single object, then its representations are the usual ones. If $M\toto M$ is a manifold, viewed as a Lie groupoid with only identities arrows, then its representations are the vector bundles over $M$. More generally, if $G\times M\toto M$ is the groupoid arising from a Lie group action $G\action M$, then a representation $(G\times M)\action E$ is the same as an equivariant vector bundle.
\end{example}

\def\hol{{\rm Hol}}

\begin{example}
Given $M$ a manifold, a representation $\pair(M)\action E$ of its pair groupoid is the same as a trivialization of $E$. Given a surjective submersion $q:M\to N$, a representation $M\times_N M\action E$ of the submersion groupoid (see \cite{dh}) is the same as an isomorphism $E\cong q^*E'$ with a pullback vector bundle. This can be further generalized to a foliation $F\subset TM$, which yields a holonomy groupoid $\hol(F)\toto M$, whose representations are the same as foliated bundles. 
\end{example}

\begin{example}\label{ex:tautological1}
Let $E\to P^2$ be the tautological line bundle over the real projective plane. Since it is not trivial there cannot be a representation of the pair groupoid $\pair(P^2)\action E$. Moreover, if $\rho:\pair(P^2)\action E$ is a pseudo-representation, then $\rho_g$ cannot be invertible for every $g$, otherwise it would still give a trivialization by transporting every vector to a fixed fiber. An example of such a pseudo-representation is mapping $\rho_{(\ell',\ell)}(v)$ to the orthogonal projection of $v\in\ell$ over $\ell'$. 
\end{example}


By means of the exponential law, a Lie groupoid representation can be described as a Lie groupoid morphism into the {\bf general linear groupoid} (see eg. \cite{dh})
$$\rho^\#:(G\toto M)\to (GL(E)\toto M) \qquad \rho^\#(g)=\rho_g$$
whose objects are the fibers of $E\to M$ and whose arrows are isomorphisms between fibers. In the case of a pseudo-representation we still have a smooth map $G\to GL(E)$ between the arrow spaces, compatible with source and target but that may fail to preserve the multiplication. 
This viewpoint allows one to treat representations as maps, and it is specially useful when dealing with differentiation and integration. 


Lie groupoid representations turn out to be very restrictive. A convenient generalization, is that of a {\bf representation up to homotopy} of a Lie groupoid $G$ over a graded vector bundle $V=\oplus V_i$. It is defined as a degree 1 differential $D$ on a space of sections $\Gamma(NG,V)$ of $V$ over the nerve of $G$ inducing a graded module structure. By decomposing $D=\oplus D_i$ into bi-homogeneous components,  we can reinterpret $D$ as a pseudo-representation over a complex $(V,\partial)$ with homotopies controlling its associativity. See \cite{ac,dho,mt} for further details. We recall here the 2-term case, the simplest new case, using an homological convention.




\begin{proposition}[\cite{dho,gm}]\label{prop:ruth}
If $V=V_1\oplus V_0$, then a representation up to homotopy $G\action V$ is the same as a tuple $(\partial,\rho_1,\rho_0,\gamma)$, where $\partial:V_1\to V_0$ is a linear map, $\rho_i:G\action V_i$ are pseudo-representations commuting with $\partial$, and \smash{$\gamma:({z\xfrom h y\xfrom g x})\mapsto(\gamma^{h,g}:\rho^{hg}\then\rho^{h}\rho^{g})$} is a curvature tensor satisfying
$$\rho_1^{g_3}\circ \gamma^{g_2,g_1}-\gamma^{g_3g_2,g_1}+\gamma^{g_3,g_2g_1}-\gamma^{g_3,g_2}\circ \rho_0^{g_1} = 0.$$
A morphism $\theta: V\to V'$ is the same as a triple $(\theta_1,\theta_0,\mu)$ where $\theta=(\theta_1,\theta_0):V\to V'$ is a vector bundle chain map and \smash{$\mu:({y\xfrom g x})\mapsto(\mu^{g}:V_0^x\to {V'_{1}}^y)$}
is a tensor satisfying 
$\rho'\theta-\theta\rho=\partial'\mu+\mu\partial$, and 
$$\theta^z_1\gamma^{h,g}+\mu^h\rho_0^g+{{\rho}'}_1^h\mu_g-\mu^{hg}-{\gamma}^{h,g}\theta^x_0=0.$$
\end{proposition}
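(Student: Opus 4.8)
The plan is to unwind the definition of a 2-term representation up to homotopy as a degree-1 differential $D$ on $\Gamma(NG,V)$ and match each bi-homogeneous component of $D$, together with the equations encoding $D^2=0$ and the module condition, against the four pieces $(\partial,\rho_1,\rho_0,\gamma)$ and the three stated relations. Concretely, I would first recall the bigrading: a section of $V$ over the nerve splits according to simplicial degree (how many arrows it eats) and internal degree ($V_0$ versus $V_1$), and $D$ decomposes as $D=\partial+\rho+\gamma+\cdots$ where $\partial$ raises internal degree by $1$ and keeps simplicial degree, $\rho$ keeps internal degree and raises simplicial degree by $1$, and $\gamma$ lowers internal degree by $1$ and raises simplicial degree by $2$. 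Because $V$ has only two terms, any component lowering internal degree by $2$ or more vanishes, so $D$ has exactly these three components; this is the key simplification that makes the 2-term case manageable.

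Next I would expand $D^2=0$ into its bi-homogeneous pieces. The piece of internal bidegree $(+2,0)$ gives $\partial^2=0$, which is automatic since $V$ has two terms (so really it says nothing, or rather it is the statement that $\partial$ alone is a differential). The pieces of mixed type give: $\partial\rho=\rho\partial$ up to sign, i.e. the $\rho_i$ assemble into a chain map commuting with $\partial$; then, using that $D$ is compatible with the $\Gamma(NG)$-module structure, one extracts that $\rho_i$ is a \emph{pseudo}-representation (it preserves identities and fiberwise linearity but need not be strictly associative), the failure of associativity being measured by $\gamma$; and finally the piece involving $\gamma$ twice, or $\gamma$ together with the simplicial face maps, yields precisely the displayed cocycle identity
$$\rho_1^{g_3}\circ\gamma^{g_2,g_1}-\gamma^{g_3g_2,g_1}+\gamma^{g_3,g_2g_1}-\gamma^{g_3,g_2}\circ\rho_0^{g_1}=0,$$
which is just $D^2=0$ read off the $3$-simplices of $NG$ (the alternating sum over the four faces of a tetrahedron). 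Conversely, given a tuple $(\partial,\rho_1,\rho_0,\gamma)$ with these properties, I would reconstruct $D$ by the same formulas and check that $D^2=0$ follows from the three listed equations together with the pseudo-representation axioms; this gives a bijection on objects.

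For the morphism statement I would do the analogous bookkeeping: a morphism of representations up to homotopy is a degree-$0$ map $\Theta$ of $\Gamma(NG)$-modules with $D'\Theta=\Theta D$, and again $\Theta$ has only finitely many bi-homogeneous components by the 2-term constraint, namely the chain map $\theta=(\theta_1,\theta_0)$ (simplicial degree $0$) and a component $\mu$ lowering internal degree by $1$ and raising simplicial degree by $1$. Writing out $D'\Theta=\Theta D$ by bidegree produces exactly $\rho'\theta-\theta\rho=\partial'\mu+\mu\partial$ from the simplicial-degree-$1$ part and the displayed relation
$$\theta^z_1\gamma^{h,g}+\mu^h\rho_0^g+{\rho'}_1^h\mu_g-\mu^{hg}-\gamma^{h,g}\theta^x_0=0$$
from the simplicial-degree-$2$ part (read off $2$-simplices), with composition of morphisms matching addition/composition of the $(\theta,\mu)$ data. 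I expect the main obstacle to be purely notational: keeping the signs and the ordering conventions consistent between the homological convention used here, the cochain conventions for $\Gamma(NG,V)$, and the curvature tensor $\gamma$, so that the two displayed equations come out with exactly the signs stated rather than with scattered discrepancies. The geometric content is entirely standard (it is the Dold--Kan-type translation of $D^2=0$ into low-degree cocycle identities), so once the conventions are pinned down the verification is routine; I would therefore refer to \cite{ac,dho,mt} for the general pattern and carry out only the degree-$\leq 3$ computations explicitly.
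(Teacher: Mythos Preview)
Your approach is correct and is essentially the standard argument: decompose $D$ by bidegree, observe that the 2-term hypothesis truncates the decomposition to $\partial+\rho+\gamma$, and read off $D^2=0$ and $D'\Theta=\Theta D$ component by component. Note, however, that the paper does not actually supply a proof of this proposition; it is stated with a reference to \cite{dho,gm} and used as a black box. So there is nothing in the paper to compare against beyond the citation, and your outline matches what one finds in those references (and in \cite{ac} for the general pattern). Your caveat about sign and ordering conventions is well placed: that really is the only place where care is needed, and different sources make different choices, so if you carry this out in full you should fix the convention for the simplicial differential on $\Gamma(NG)$ at the outset and stick to it.
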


\def\coker{{\rm coker\, }}

The {\bf point-wise homology} of a 2-term representation $G\action V$ consists of $H_1^x(V)=\ker \partial^x$ and $H_0^x(V)=\coker\partial^x$. If the rank of $\partial$ is constant then $H_1(V)$ and $H_0(V)$ are vector bundles and there is an induced representation over them.
A representation up to homotopy $V$ whose point-wise homology vanishes is called {\bf acyclic}. A morphism $\theta:V\to W$ of 2-term representations up to homotopy inducing isomorphims on the point-wise homology is called a {\bf quasi-isomorphism}.


\begin{example}\label{ex:tautological2}
For $\rho:\pair(P^2)\action E$ the pseudo-representation discussed before, we can define an acyclic representation up to homotopy $\pair(P^2)\action E\oplus E$ by setting $\partial=\id$, $\rho_1=\rho_0=\rho$ and $\gamma=\rho-\rho\rho$. The same can be done for any pseudo-representation.
\end{example}


\begin{example}
Given $G\toto M$ a Lie groupoid endowed with a connection $\sigma$, namely a section of $s:TG\to s^*TM$, the {\bf adjoint representation} $G\action (A\oplus TM)$ has $\partial$ equal to the anchor map and $\rho_0$ given by $t\sigma$. The equivalence class does not depend on $\sigma$. This generalizes the classical adjoint representation of Lie groups and plays a role in the deformation theory of groupoids. The {\bf coadjoint representation} $G\action T^*M\oplus A^*$ is defined by duality. 
\end{example}



%
%
%


We are now ready to present our main theorem. Given a Lie groupoid $G\toto M$ we have a canonical projection $\pi_G:G\to\pair(M)$ into the pair groupoid that just remembers the source and target of an arrow. Given a 2-term vector bundle $V\to M$, we have a canonical projection $\pi_V:GL(V)\to \pair(M)$ that only remembers the base-points on the vector bundle. 

\begin{theorem}\label{thm:main}
Given $G\toto M$ a Lie groupoid and $V=V_1\oplus V_0\to M$ a graded vector bundle, there is an equivalence between the category of representations up to homotopy $\rho:G\action V$ and quasi-isomorphisms and the category of pseudo-functors $\phi:G\dashto GL(V)$ satisfying $\pi_{V}\phi=\pi_G$ and smooth lax equivalences.
\end{theorem}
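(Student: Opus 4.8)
The plan is to unwind both sides of the claimed equivalence into concrete tensorial data and exhibit a dictionary between them, then check that this dictionary is functorial and preserves the two notions of ``equivalence''. First I would use Proposition~\ref{prop:ruth} to present a $2$-term representation up to homotopy $\rho:G\action V$ as a tuple $(\partial,\rho_1,\rho_0,\gamma)$, and a morphism as a triple $(\theta_1,\theta_0,\mu)$. On the other side, since $GL(V)$ is a Lie $2$-groupoid (Theorem~\ref{thm:GL(V)}), its nerve is a simplicial manifold (Theorem~\ref{thm:NG}), so a smooth pseudo-functor $\phi:G\dashto GL(V)$ is the same as a smooth simplicial map $NG\to NGL(V)$; concretely it is given by $\phi_0:M\to GL(V)_0$, $\phi_1:G\to GL(V)_1$ and $\phi_{1,1}:G\times_M G\to GL(V)_2$ subject to the coherence axioms (i)--(iii) for a pseudo-functor. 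The condition $\pi_V\phi=\pi_G$ forces $\phi_0$ to pick out, for each $x\in M$, an object $\partial^x\in GL(V)_0$ supported on the fiber $V^x$ — i.e.\ a vector bundle map $\partial:V_1\to V_0$ — and forces $\phi_1(g)$ for $y\xfrom g x$ to be a quasi-isomorphism from $V^x$ to $V^y$, that is a pair $\rho^g=(\rho_1^g,\rho_0^g)$ commuting with $\partial$, and $\phi_{1,1}(h,g)$ to be a chain homotopy $V_0^x\to V_1^y$, i.e.\ precisely the curvature component $\gamma^{h,g}$. So the correspondence on objects is essentially tautological once the unwinding is done: $\phi\leftrightarrow(\partial,\rho_1,\rho_0,\gamma)$ via $\partial=\phi_0$, $\rho^g=\phi_1(g)$, $\gamma^{h,g}=\phi_{1,1}(h,g)$ (possibly up to a sign or an inversion, matching the conventions in the nerve and in Remark~\ref{lemma:nerves}).

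The substantive bookkeeping is to match the \emph{axioms}. I would check that the normality axiom (i) for the pseudo-functor, $\phi_{1,1}(\id,g)=\id=\phi_{1,1}(g,\id)$, corresponds to the unit conditions $\rho^{\id}=\id$, $\gamma^{\id,g}=\gamma^{h,\id}=0$; that axiom (ii) (compatibility with $2$-cells) is automatically satisfied since $G$ has only identity $2$-cells and hence imposes exactly that $\rho_i$ are pseudo-representations commuting with $\partial$; and, crucially, that the associativity coherence axiom (iii) of the pseudo-functor, written out using that $\circ$ in $GL(V)$ is composition of chain maps and $\bullet$ is addition of homotopies, becomes precisely the displayed curvature identity
$$\rho_1^{g_3}\circ \gamma^{g_2,g_1}-\gamma^{g_3g_2,g_1}+\gamma^{g_3,g_2g_1}-\gamma^{g_3,g_2}\circ \rho_0^{g_1} = 0.$$
This last identification is the heart of the argument; it is a direct translation of the tetrahedron equation in $NGL(V)$ displayed in Section~3 once one substitutes the explicit formulas for $\circ,\bullet,{}^{-1}$ in $GL(V)$. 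The same line of argument, applied to a lax transformation $H:\phi\then\psi$ between two such pseudo-functors, identifies $H$ with a morphism $\theta=(\theta_1,\theta_0,\mu)$ of representations up to homotopy: $H_x\in GL(V)_1$ is a chain map $\theta^x:V^x\to{V'}^x$ (not necessarily invertible — this is why we allow $H_x$ to be invertible only up to a $2$-cell, matching that $\theta$ need only be a quasi-isomorphism), and $H_g\in GL(V)_2$ is the homotopy $\mu^g$; the naturality axiom (ii) for $H$ becomes $\rho'\theta-\theta\rho=\partial'\mu+\mu\partial$, and the prism axiom (iii) becomes the displayed relation $\theta^z_1\gamma^{h,g}+\mu^h\rho_0^g+{\rho'}_1^h\mu_g-\mu^{hg}-\gamma^{h,g}\theta^x_0=0$.

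Finally I would package this as a functor in each direction, check the two composites are the identity on the nose (they are, by construction, since the dictionary is given by literal formulas), and verify compatibility with composition of morphisms and with identities; smoothness on each side is equivalent by Theorem~\ref{thm:NG} and the definition of smooth lax functor/transformation, so nothing extra is needed there. The last point is that a smooth lax equivalence $H$ — $H_x$ invertible up to a $2$-cell, $H_f$ invertible — corresponds exactly to a quasi-isomorphism $\theta$: invertibility of $H_x$ up to a homotopy says $\theta^x$ is a chain homotopy equivalence, i.e.\ induces isomorphisms on $H_1=\ker\partial$ and $H_0=\coker\partial$, which is the definition of quasi-isomorphism, and invertibility of $H_f$ in $GL(V)_2$ is automatic. \textbf{Main obstacle.} I expect the only real work — beyond careful but routine unwinding — to be pinning down the sign and inversion conventions so that axiom (iii) of the pseudo-functor literally produces the curvature equation in Proposition~\ref{prop:ruth} (rather than a sign-variant of it), since this requires being consistent about the orientation of $2$-cells in $NGL(V)$, the direction of $\phi_{1,1}$, and the homological-versus-cohomological convention flagged before Proposition~\ref{prop:ruth}; once the conventions are fixed the identity is forced and the rest is formal.
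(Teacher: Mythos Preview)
Your proposal is correct and follows essentially the same route as the paper: both identify a smooth pseudo-functor $G\dashto GL(V)$ with a simplicial map $NG\to NGL(V)$ via Theorems~\ref{thm:GL(V)} and~\ref{thm:NG} (together with Proposition~\ref{prop:lax=simplicial}), then read off the dictionary $\phi_0\leftrightarrow\partial$, $\phi_1\leftrightarrow\rho$, $\phi_{1,1}\leftrightarrow\gamma$ and match lax equivalences with quasi-isomorphisms $(\theta,\mu)$ as in Proposition~\ref{prop:ruth}. Your write-up is in fact more explicit than the paper's about the axiom-by-axiom matching and the sign/convention issue, but the strategy is identical.
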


This result is truly a generalization of the situation for ordinary representations. That is, when $V$ is only in degree 0, then $GL(V)$ is the usual general linear groupoid, and the
pseudo-functors $G\dashto  GL(V )$ are just morphisms of Lie groupoids.

\begin{proof}
This is a rather direct consequence of the constructions and results collected during our work. In light of the set-theoretical simplicial interpretation (cf. \ref{prop:lax=simplicial}), our construction of the general linear 2-groupoid (cf. \ref{thm:GL(V)}), and our characterization for smooth nerve (cf. \ref{thm:NG}), a smooth pseudo-functor $\phi:G\dashto GL(V)$ is the same as a simplicial map $\phi:NG\to NGL(V)$. The degree 0 component $\phi_0$ is the same as a differential $\partial$ on $V$, the degree 1 component  $\phi_1$ gives a pseudo-representation $\rho$ on $V$ compatible with $\partial$, and the degree 2 component $\phi_2$ yields a curvature tensor \smash{$\gamma:({z\xfrom h y\xfrom g x})\mapsto(\gamma^{h,g}:\rho^{hg}\then\rho^{h}\rho^{g})$}, defining a 2-term representation up to homotopy, as characterized in proposition \ref{prop:ruth}. 
Similarly, a smooth lax equivalence $H:\phi\then\psi:G\dashto GL(V)$  consists of smooth maps $M\to GL(V)_1$, $G\to GL(V)_2$, corresponding to the components $\theta$ and $\mu$ of a quasi-isomorphism (cf. \ref{prop:ruth}). It is straightforward to check that these correspondences between objects and arrows are functorial.
%
%
\end{proof}

There are some remarks to be made regarding functoriality. Firstly, even though a quasi-isomorphism $\theta:V\to V$ of representations up to homotopy gives a simplicial homotopy $NG\times I\to NGL(V)$, not every such homotopy arises in this way, as can be seen in the proof of \ref{prop:lax=simplicial}. 
Secondly, if we want to consider morphisms $V\to V$ that are not quasi-isomorphisms, then the corresponding lax transformations would involve chain maps that are not within $GL(V)$.
Lastly, since the construction $V\mapsto GL(V)$ is not functorial, it makes little sense to frame the non-invertible morphism $V\to V'$ between different vector bundles within our theory.



We close this paper by outlining three different problems related to our results, the first related to the infinitesimal picture, the second to the theory of 2-stacks, and the third to higher versions of our results.


\begin{remark}
In \cite{me}, an infinitesimal analog to our main theorem was announced. 
It is commonly accepted that weak higher Lie groupoids and higher Lie algebroids are related by a theory of differentiation and integration, though the details of such a theory are yet to be understood. Within this context, we expect that the differentiation of our general linear 2-groupoid is the object $gl(V)$ introduced there, and that the differentiation and integration of maps will provide an alternative approach to the integration of 2-term representations up to homotopy, other than that of \cite{bcdh}.
\end{remark}


\begin{remark}
In \cite{dho}, the Morita equivalences of VB-groupoids are discussed. It is proved there that the derived category of VB-groupoids $VB[G]$ over a fixed base is a Morita invariant, and consequently, the same holds for 2-term representations up to homotopy. This result, from our framework, admits the following interpretation. Our general linear 2-groupoid $GL(V)$ represents a {\em differentiable 2-stack}, and the maps into it classify certain VB-groupoids, with prescribed side and core bundle. This should be thought of as an incarnation of the 2-stack $Perf_2$ appearing in algebraic geometry. Further details demand a better understanding of differentiable 2-stacks, and are postponed to be studied elsewhere.
\end{remark}


\begin{remark}
It is natural to expect our results to remain valid on higher degrees. The construction of the general linear groupoid seems suitable to be generalized for more general graded vector bundles. The understanding of pseudo-functors within this context seems to be less clear, though a complete immersion into the simplicial approach would solve this issue. Related to this, a realization of more general representations up to homotopy as higher VB-groupoids is being studied in the ongoing project \cite{dht}. Expectations here should be curbed, for even disregarding the smooth and linear structures, such a higher analog for Grothendieck correspondence is still unknown.
\end{remark}


\frenchspacing

{\small

}

\bigskip
 
\sf{\noindent Matias del Hoyo\\
Universidade Federal Fluminense (UFF),\\ 
Rua Professor Marcos Waldemar de Freitas Reis, s/n, 24210-201, Niter\'oi, Brazil.\\
mldelhoyo@id.uff.br}

\

\sf{\noindent Davide Stefani\\
Universit\'e  Pierre et Marie Curie (UPMC),  \\ 
5, place Jussieu, 75005, Paris, France.\\
davide.stefani@imj-prg.fr}

\end{document}